\newcommand{\bbN}{\mathbb{N}}          
\newcommand{\sym}{\mathcal{S}}         
\newcommand{\s}{S}                     
\newcommand{\symnk}[2]{\sym_{#1}^{#2}} 
\newcommand{\snk}[2]{S_{#1}^{#2}}      
\renewcommand{\P}{\mathcal{P}}         
\newcommand{\Q}{\mathcal{Q}}           
\newcommand{\A}{\mathcal{A}}           
\newcommand{\B}{\mathcal{B}}           
\newcommand{\C}{\mathcal{C}}           
\DeclareMathOperator{\inv}{\text{\sc inv}}
\newcommand{\g}[1]{\node[fill=gray!40]{#1};}
\newtheorem{theorem}{Theorem}
\newtheorem{lemma}[theorem]{Lemma}
\newtheorem{proposition}[theorem]{Proposition}
\newtheorem{corollary}[theorem]{Corollary}
\newtheorem{conjecture}[theorem]{Conjecture}
\theoremstyle{definition}
\newtheorem{definition}[theorem]{Definition}
\begin{document}

\title{Upper bounds for the Stanley-Wilf limit of 1324 and other layered
  patterns} 

\author[strath]{Anders Claesson\fnref{ice}}
\ead{anders.claesson@cis.strath.ac.uk}

\author[prague]{V\'{\i}t Jel\'{\i}nek\fnref{msm}} 
\ead{jelinek@kam.mff.cuni.cz}

\author[strath]{Einar Steingr\'{\i}msson\fnref{ice}}
\ead{Einar.Steingrimsson@cis.strath.ac.uk}

\address[strath]{Department of Computer and Information Sciences, University
  of Strathclyde, Glasgow G1 1XH, UK.}  

\address[prague]{Department of Applied Mathematics, Faculty
of Mathematics and Physics, Charles University, Malostranské náměstí
  25, Prague 1, 118 00, Czech Republic.}

\fntext[ice]{Supported by grant no.\ 090038013 from the
  Icelandic Research Fund.}

\fntext[msm]{Supported by project MSM0021620838 of the Czech
Ministry of Education.}


\begin{abstract}
  We prove that the Stanley-Wilf limit of any layered permutation
  pattern of length $\ell$ is at most~$4\ell^2$, and that the
  Stanley-Wilf limit of the pattern 1324 is at most 16. These bounds
  follow from a more general result showing that a permutation
  avoiding a pattern of a special form is a merge of two permutations,
  each of which avoids a smaller pattern.  If the conjecture is true
  that the maximum Stanley-Wilf limit for patterns of length $\ell$ is
  attained by a layered pattern then this implies an upper bound of
  $4\ell^2$ for the Stanley-Wilf limit of any pattern of length
  $\ell$.

  We also conjecture that, for any $k\ge 0$, the set of 1324-avoiding
  permutations with $k$ inversions contains at least as many
  permutations of length $n+1$ as those of length~$n$. We show that if
  this is true then the Stanley-Wilf limit for 1324 is at
  most~$e^{\pi\sqrt{2/3}} \simeq 13.001954$.
\end{abstract}

\begin{keyword}
 Stanley-Wilf limit \sep layered pattern
\end{keyword}

\maketitle

\section{Introduction}
\label{sec:introduction}

For a permutation pattern $\tau$, let $\sym_n(\tau)$ be the set of
permutations of length $n$ avoiding $\tau$, and let $\s_n(\tau)$ be
the cardinality of $\sym_n(\tau)$.  In 2004, Marcus and
Tardos~\cite{MaTa} proved the Stanley-Wilf conjecture, stating that,
for any pattern $\tau$, $\s_n(\tau)<C^n$ for some constant $C$
depending only on~$\tau$. The limit
$$
L(\tau) = \lim_{n\to\infty}{\s_n(\tau)^{1/n}}
$$ is called the \emph{Stanley-Wilf limit} for
$\tau$. Arratia~\cite{Arratia} has shown that this limit exists for
any pattern $\tau$.

Marcus and Tardos's original proof gives a general upper bound for the
Stanley-Wilf limit of the form
\begin{align*}
  L(\tau) &\leq 15^{2\ell^4\binom{\ell^2}{\ell}},
  \intertext{where $\ell=|\tau|$. This bound was later improved
    by Cibulka~\cite{Cibulka} to}
  L(\tau) &\leq 2^{O(\ell\log\ell)}.
\end{align*}
A result of Valtr presented in~\cite{KaKl} shows that for any pattern
$\tau$ of length $\ell$ we have $L(\tau)\ge (1-o(1))\ell^2/e^3$ as
$\ell\to\infty$.

For certain families of patterns, more precise estimates are
available. An important example is given by the \emph{layered
  patterns}. A permutation $\tau$ is {layered} if it is a
concatenation of decreasing sequences, the letters of each sequence
being smaller than the letters in the following sequences. An example
is 321465798, whose layers are 321, 4, 65, 7, and 98.
B\'ona~\cite{bona-sharper,bona-tenacious,bona-layered-beat} has shown
that $(\ell-1)^2\le L(\tau)\le 2^{O(\ell)}$ for any layered pattern
$\tau$ of length~$\ell$.

A motivation for the study of Stanley-Wilf limits of layered patterns
stems from the fact that these patterns appear to yield the largest
values of $\s_n(\tau)$ among the patterns $\tau$ of a given length.
More precisely, computer enumeration of $\s_n(\tau)$ for patterns
$\tau$ of fixed size up to eight and small $n$ suggests that
$\s_n(\tau)$ is maximized by a layered pattern~$\tau$.  This supports
the following conjecture.

\begin{conjecture}[See \cite{bona-records}]\label{conj-layered}
  Among the patterns of a given length, the largest Stanley-Wilf limit
  is attained by a layered pattern.
\end{conjecture}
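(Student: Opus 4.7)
The plan is to attack the conjecture via a pattern-by-pattern comparison: for every non-layered pattern $\tau$ of length $\ell$, exhibit a layered pattern $\sigma$ of length $\ell$ with $L(\tau) \le L(\sigma)$. Ideally one obtains an injection $\sym_n(\tau)\hookrightarrow\sym_n(\sigma)$ valid for all $n$; more modestly, a polynomial-factor bound $\s_n(\tau)\le p(n)\,\s_n(\sigma)$ already implies $L(\tau)\le L(\sigma)$. A natural first attempt is to take $\sigma$ to be the layered pattern whose layer composition records the descent-run structure of $\tau$, so that $\tau$ and $\sigma$ share the same descent set; one then seeks to build the required injection by a local re-sorting operation applied to each potential occurrence of $\sigma$ inside a $\tau$-avoider.

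A second ingredient is to quotient by the eight symmetries of permutations (identity, reverse, complement, inverse, and their compositions), all of which preserve the Stanley-Wilf limit. This substantially reduces the case analysis and allows one to focus on non-layered patterns that are inequivalent under these symmetries to any layered pattern. Combined with computer enumeration and with the upper bound $L(\tau)\le 4\ell^2$ proved in the present paper, against the lower bound $L(\sigma)\ge (\ell-1)^2$ for layered $\sigma$ due to B\'ona, this already lets one verify the conjecture for small $\ell$: whenever $4\ell^2 \le (\ell-1)^2 \cdot \text{(max layered factor)}$ one is done, and in the remaining bounded range the comparison can be checked by hand.

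The main obstacle is that a direct pointwise injection typically fails. Wilf classes that contain no layered representative --- the 1342 class at length 4 and many others at larger $\ell$ --- must nevertheless be shown to be dominated by some layered pattern, and no systematic construction of such a domination is currently known. A genuine proof of the conjecture therefore seems to require a new global comparison tool: perhaps an entropy- or container-style argument bounding $\s_n(\tau)$ in terms of $\s_n(\sigma)$ for a suitably chosen layered $\sigma$, or an amortised counting scheme exploiting the structural restrictions forced on $\tau$-avoiders by the absence of layered sub-patterns. Designing such a tool is where essentially all the difficulty lies; without it the conjecture appears out of reach of present methods.
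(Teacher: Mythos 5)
This statement is labelled a \emph{conjecture} in the paper, not a theorem: the authors do not prove it, and indeed the present paper's only direct evidence is the computer enumeration mentioned in the introduction and the indirect structural results of Sections~3--4. So there is no proof of the statement for your proposal to be compared against; the honest comparison is between your proposal and the paper's decision to leave this open, and on that score you reach the same conclusion --- the conjecture ``appears out of reach of present methods'' --- which is consistent with the paper's treatment.

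That said, two steps in your sketch are not merely heuristic but actually wrong as stated, and are worth correcting. First, the bound $L(\tau)\le 4\ell^2$ in this paper (Corollary~\ref{cor:layered}) is proved \emph{only for layered} $\tau$; the extension to arbitrary $\tau$ is precisely Conjecture~2, which is derived \emph{assuming} Conjecture~\ref{conj-layered}. Invoking the $4\ell^2$ bound for a general non-layered $\tau$ in an argument aimed at establishing Conjecture~\ref{conj-layered} is therefore circular. Second, the numerical comparison ``$4\ell^2 \le (\ell-1)^2\cdot(\text{max layered factor})$'' does not get you anywhere: B\'ona's lower bound $(\ell-1)^2$ for layered patterns is strictly smaller than $4\ell^2$ for all $\ell$, so knowing $L(\tau)\le 4\ell^2$ and $L(\sigma)\ge(\ell-1)^2$ never yields $L(\tau)\le L(\sigma)$. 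To make this line of attack bite you would need either a sharper upper bound on $L(\tau)$ for the specific non-layered $\tau$ under consideration, or a sharper lower bound on $L(\sigma)$ for a specific well-chosen layered $\sigma$, and neither is supplied. The descent-set-matching injection you propose as a ``first attempt'' is also known to fail already at $\ell=4$ (no injection $\sym_n(1342)\hookrightarrow\sym_n(1243)$ or into any layered class is known, and the generating functions are genuinely different), so it cannot serve even as a base case. Your concluding paragraph, which identifies the lack of any global comparison tool as the real obstruction, is the accurate assessment.
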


We remark that B\'ona, just after Theorem~4.6 in \cite{bona-records}, presents a
stronger version of Conjecture~\ref{conj-layered} as a `long-standing
conjecture'.  The stronger conjecture states that the maximum for
$L(\tau)$ over all $\tau$ of a given length $\ell$ is attained by
$1\oplus21\oplus\cdots\oplus21\oplus1$ or
$1\oplus21\oplus\cdots\oplus21$, depending on whether
$\ell$ is even or odd.

Two patterns $\tau$ and $\sigma$ are \emph{Wilf equivalent} if
$\s_n(\tau)=\s_n(\sigma)$ for all $n$.  If $\tau$ is of length three,
then $\s_n(\tau)$ is the $n$th Catalan number, and so $L(\tau)=4$.
For patterns of length four there are three Wilf (equivalence)
classes, represented by 1234, 1342 and 1324. Regev~\cite{Regev} proved
that $L(1234)=9$ and, more generally, that $L(12\dotsb
\ell)=(\ell-1)^2$. B\'ona \cite{bona-1342} proved that $L(1342)=8$.
In fact, exact formulas for $\s_n(1234)$ and $\s_n(1342)$ are known,
the first one being a special case of such a result for the increasing
pattern of any length, established by Gessel \cite{gessel-symmetric},
the second one obtained by B\'ona~\cite{bona-1342}.

The last Wilf class of patterns of length 4, represented by the pattern 1324, has so far resisted all attempts at exact
enumeration or exact asymptotic formulas. A lower bound was found by
Albert et al.~\cite{albert-elder}, who showed that
$\s_n(1324)>9.47^n$. In particular, that disproved a conjecture of
Arratia \cite{Arratia} that for any pattern $\tau$ of length $\ell$,
$L(\tau)$ is at most $(\ell-1)^2$.

As far as we know, the best published upper bound so far for $L(1324)$
is 288, proved by B\'ona~\cite{bona-tenacious}\footnote{In an earlier
  version of the proof~\cite{bona-wrong}, Bóna claims that $L(1324)\le
  36$, but the argument appears flawed.}.

In this paper, we present a general method that allows us to bound the
Stanley-Wilf limit of an arbitrary layered pattern, and which may also
be used for non-layered patterns of a special form. In particular, for
an arbitrary layered pattern $\tau$ of length~$\ell$, we prove the
bound $L(\tau)\le 4\ell^2$, improving B\'ona's bound of~$2^{O(\ell)}$.
Our bound is sharp up to a multiplicative constant, since
$(\ell-1)^2\le L(\tau)$. If Conjecture~\ref{conj-layered} holds, then this
result has the following consequence, which we state as a separate conjecture.

\begin{conjecture}
  For any pattern $\tau$ of length $\ell$, we have $L(\tau)\le
  4\ell^2$.
\end{conjecture}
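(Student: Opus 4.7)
The plan is to derive the conjecture as a two-step consequence. First, one establishes the main theorem advertised in the paper: for every layered pattern $\tau$ of length $\ell$, $L(\tau)\le 4\ell^2$. Second, one invokes Conjecture~\ref{conj-layered} to conclude that for an arbitrary pattern $\sigma$ of length $\ell$ there is a layered pattern $\tau$ of the same length with $L(\sigma)\le L(\tau)\le 4\ell^2$. Since Conjecture~\ref{conj-layered} is itself the missing ingredient, the only content of the present conjecture beyond it lies in step~(i).

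For step~(i) I would follow the \emph{merge} strategy announced in the abstract. If $\tau$ has at least two layers, decompose it as $\tau=\tau_1\oplus\tau_2$ into two layered patterns with $|\tau_i|=\ell_i$ and $\ell_1+\ell_2=\ell$, chosen so that $\ell_1$ and $\ell_2$ are as balanced as possible. The key structural lemma is that every $\pi\in\sym_n(\tau)$ can be expressed as a merge of two subsequences $\pi^{(1)}$ and $\pi^{(2)}$ of lengths $k$ and $n-k$ such that the pattern of $\pi^{(1)}$ avoids $\tau_1$ and the pattern of $\pi^{(2)}$ avoids $\tau_2$, with the split determined canonically by $\pi$ (for instance, by the largest value threshold $v$ for which the restriction of $\pi$ to values $\le v$ still avoids $\tau_1$, forcing the complementary high part to avoid $\tau_2$ lest the two pieces combine into a $\tau$-copy). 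Counting merges then yields
\[
\s_n(\tau)\le\sum_{k=0}^{n}\binom{n}{k}\,\s_k(\tau_1)\,\s_{n-k}(\tau_2),
\]
and combining this with $\s_k(\tau_i)\le C_i\,L(\tau_i)^k$ and the binomial theorem produces the additive recurrence $L(\tau)\le L(\tau_1)+L(\tau_2)$.

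The recurrence is then applied by induction on $\ell$. The base case is a single-layer pattern (a decreasing pattern of length $\ell$), for which Regev's theorem gives $L(\tau)=(\ell-1)^2\le 4\ell^2$. In the inductive step, the elementary inequality $\ell_1^2+\ell_2^2\le(\ell_1+\ell_2)^2$ combined with the inductive hypothesis yields $L(\tau)\le 4\ell_1^2+4\ell_2^2\le 4\ell^2$, which closes the induction.

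The main technical obstacle will be the structural lemma: justifying that a canonical value threshold always produces the required clean split is where the layered form of $\tau$ must be used in an essential way, and making this rigorous is likely to require a careful case analysis of how a forced $\tau_1$-copy near the threshold can interact with potential $\tau_2$-copies on the other side. A second, unavoidable obstacle is Conjecture~\ref{conj-layered} itself, which is long-standing and open; without a proof of it, the $4\ell^2$ bound cannot be promoted from layered patterns to arbitrary patterns of length $\ell$ by this route.
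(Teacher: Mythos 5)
Your overall architecture is right and matches what the paper intends: the statement is presented as a conjecture precisely because it equals ``Conjecture~\ref{conj-layered} plus the layered bound'', so the genuine content is step~(i), the bound $L(\tau)\le 4\ell^2$ for layered~$\tau$. But your sketch of step~(i) has two errors, one of which is fatal.

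First, the merge count. If $\pi$ is a merge of permutations of lengths $k$ and $n-k$, you must choose both the $k$ \emph{positions} and the $k$ \emph{values} occupied by the first part, so the correct bound is
\[
\s_n(\tau)\;\le\;\sum_{k=0}^{n}\binom{n}{k}^{2}\,\s_k(\tau_1)\,\s_{n-k}(\tau_2),
\]
which yields $\sqrt{L(\tau)}\le\sqrt{L(\tau_1)}+\sqrt{L(\tau_2)}$ (the paper's Lemma~\ref{lemma:merge}), not the additive recurrence $L(\tau)\le L(\tau_1)+L(\tau_2)$ you wrote. By itself this is benign for the final bound --- if you had $\sqrt{L(\tau_i)}\le 2\ell_i$ you would still get $\sqrt{L(\tau)}\le 2\ell$ --- but it is a genuine miscount.

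Second, and fatally, your structural lemma is false. It is not true that every permutation avoiding $\tau_1\oplus\tau_2$ is a merge of a $\tau_1$-avoider and a $\tau_2$-avoider. Take $\tau_1=21$, $\tau_2=1$, so $\tau=213$. The permutation $\pi=321$ avoids $213$, yet a $1$-avoider is empty, so the claim would force $321$ to avoid $21$, which it does not. Your proposed ``value threshold'' repair does not help: the largest $v$ for which the values $\le v$ of $321$ avoid $21$ is $v=1$, leaving the high part $32$, which is not empty. The reason the naive split fails is that the two pieces can cooperate across the threshold to build a copy of $\tau$. The paper's Lemma~\ref{lemma:red-blue} avoids this by making the two sub-patterns \emph{overlap} on a shared middle layer: a permutation avoiding $\sigma\oplus(\tau\ominus1)\oplus\rho$ is a merge of a $\sigma\oplus(\tau\ominus1)$-avoider and a $(\tau\ominus1)\oplus\rho$-avoider, and the proof is a nontrivial greedy red/blue coloring rather than a value cut. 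Applying that lemma iteratively, as in Lemma~\ref{lemma:layered}, the middle layers get counted twice, and the resulting sum $\ell_1+\ell_m-m+1+2\sum_{i=2}^{m-1}\ell_i\le 2\ell$ is exactly what produces $4\ell^2$. Your inductive bookkeeping with a disjoint split ($\ell_1^2+\ell_2^2\le(\ell_1+\ell_2)^2$) would give a stronger bound, but it rests on a decomposition that does not exist.
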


For some specific patterns, we are able to give better estimates. Notably, we
are able to show that $L(1324)\leq 16$. These results appear in
Section~\ref{sec:stanley-wilf-limit}.

In Section~\ref{sec:conjectures}, we investigate an approach that may
lead to a further improvement of our bounds, based on the analysis of
pattern-avoiding permutations with a restricted number of
inversions. For a pattern $\tau$, let $\symnk{n}{k}(\tau)$ be the set
of 1324-avoiding permutations of length $n$ with exactly $k$
inversions, and let $\snk{n}{k}(\tau)$ be its cardinality. We
conjecture that for every $n$ and~$k$, we have
$\snk{n}{k}(1324)\le\snk{n+1}{k}(1324)$. We prove that if this
conjecture holds then
$$L(1324)\leq e^{\pi\sqrt{2/3}}\simeq 13.001954.
$$

In the last section, we extend our considerations to more general
patterns. We conjecture that the inequality
$\snk{n}{k}(\tau)\le\snk{n+1}{k}(\tau)$ is valid for any pattern
$\tau$ other than the increasing patterns. As an indirect support of
this conjecture, we describe how the asymptotic behavior of
$\snk{n}{k}(\tau)$ for $k$ fixed and $n$ going to infinity depends on
the structure of~$\tau$.

\section{The Stanley-Wilf limit of 1324 is at most 16}
\label{sec:stanley-wilf-limit}

Let us begin by recalling some standard notions related to permutation
patterns.  Two sequences of integers $a_1\dotsb a_k$ and $b_1\dotsb
b_k$ are \emph{order-isomorphic} if for every $i,j\in\{1,\dotsc,k\}$
we have $a_i<a_j \Leftrightarrow b_i<b_j$. We let $\sym_n$ be the set
of permutations of the letters $\{1,2,\ldots,n\}$. For a permutation
$\pi\in\sym_n$ and a set
$I=\{i_1<i_2<\dotsb<i_k\}\subseteq\{1,\dotsc,n\}$, we let $\pi[I]$
denote the permutation in $\sym_k$ order-isomorphic to the sequence
$\pi(i_1)\pi(i_2)\dotsb \pi(i_k)$. 
A permutation $\pi\in\sym_n$ \emph{contains} a permutation
$\sigma\in\sym_k$ if $\pi[I]=\sigma$ for some~$I$. In such context, $\sigma$ is
often called \emph{a pattern}. 

We say that a permutation $\pi\in\sym_n$ is a \emph{merge} of two permutations
$\sigma\in\sym_k$ and $\tau\in\sym_{n-k}$ if there are two disjoint
sets $I$ and $J$ such that $I\cup J=\{1,\dotsc,n\}$, $\pi[I]=\sigma$
and $\pi[J]=\tau$.  For example, 3175624 is a merge of $\sigma=132$
and $\tau=1423$ with $I=\{1,3,4\}$ and $J=\{2,5,6,7\}$.

For a pair of permutations $\sigma\in\sym_k$ and $\tau\in\sym_{\ell}$,
their \emph{direct sum}, denoted by $\sigma\oplus\tau$, and their
\emph{skew sum}, denoted by $\sigma\ominus\tau$, are defined by
$$
(\sigma\oplus\tau)(i) =
\begin{cases}
  \sigma(i) &\text{if }i\leq k\\
  \tau(i-k)+k &\text{if }i > k
\end{cases}
\quad\,\text{ and }\quad\,
(\sigma\ominus\tau)(i) =
\begin{cases}
  \sigma(i)+\ell &\text{if } i\leq k\\
  \tau(i-k)        &\text{if } i > k.
\end{cases}
$$
For example, $231\oplus3142=2316475$ and $231\ominus3142=6753142$.  A
permutation is \emph{decomposable} if it can be written as a direct
sum of two nonempty permutations, otherwise it is
\emph{indecomposable}. Every permutation $\pi$ can be uniquely written
as a direct sum (possibly with a single summand) of the form
$\pi=\alpha_1\oplus\dotsb\oplus\alpha_m$, where each summand
$\alpha_i$ is indecomposable. The summands $\alpha_1,\dotsc,\alpha_m$
are the \emph{components} of~$\pi$. For example, the permutation
31425786 is decomposed as $31425786=3142\oplus1\oplus231$, which means
that it has three components, corresponding to 3142, 5 and 786.

The key tool in our approach is the next lemma, which shows that a
permutation avoiding a pattern of a particular kind is a merge of two
permutations, each of them avoiding a smaller pattern.

\begin{lemma}\label{lemma:red-blue}
  Let $\sigma$, $\tau$, and $\rho$ be three (possibly empty)
  permutations. Then every permutation avoiding
  $\sigma\oplus(\tau\ominus 1)\oplus\rho$ is a merge of a permutation
  avoiding $\sigma\oplus(\tau\ominus 1)$ and a permutation avoiding
  $(\tau\ominus 1)\oplus\rho$.
\end{lemma}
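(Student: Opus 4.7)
The plan is to produce an explicit two-colouring of the letters of $\pi$ that realises the merge. Two natural dual colourings suggest themselves; I will describe one of them. Colour $\pi(j)$ \emph{red} if $\pi(j)$ occurs as one of the letters of the $\rho$-part in some occurrence of $(\tau\ominus 1)\oplus\rho$ in $\pi$, and colour it \emph{blue} otherwise. (If $\sigma$ or $\rho$ is empty the lemma is trivial, since $\pi$ itself serves as one of the two merge-parts; so we may assume both are non-empty.)

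The easier direction follows immediately from the definition. If the blue subsequence contained a copy of $(\tau\ominus 1)\oplus\rho$, that copy would contribute its $|\rho|\geq 1$ letters in the $\rho$-part to the red set, contradicting the assumption that every letter in the copy is blue. Hence the blue subsequence avoids $(\tau\ominus 1)\oplus\rho$.

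The harder direction is to show that the red subsequence avoids $\sigma\oplus(\tau\ominus 1)$. I would argue by contradiction. Suppose it does not: let $j^\star$ be the pivot (the ``$1$'' of $\tau\ominus 1$) of a red occurrence, with red $\sigma$-part $S$ and $\tau$-part $T$ placed before $j^\star$ in $\pi$, and with values $S<\pi(j^\star)<T$. Since $\pi(j^\star)$ is red, there is an occurrence $C_\star=(T_\star,p_\star,R_\star)$ of $(\tau\ominus 1)\oplus\rho$ in $\pi$ whose $\rho$-part $R_\star$ contains $\pi(j^\star)$; in particular $p_\star<j^\star$ and $\pi(j^\star)>\max T_\star>\pi(p_\star)$. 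The goal is to assemble, from the combined data $(S,T,j^\star,T_\star,p_\star,R_\star)$, a genuine occurrence of $\sigma\oplus(\tau\ominus 1)\oplus\rho$ in $\pi$, which by hypothesis does not exist. This requires carefully choosing $C_\star$---for instance so that $\pi(j^\star)$ is as early as possible in $R_\star$, guaranteeing many letters of $R_\star$ sit to the right of $j^\star$---and, when necessary, iterating the argument through the red letters in $S$ and $T$ (each of which is by definition also the $\rho$-letter of a further copy) to realign positions and values.

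The main obstacle is precisely this combination step. The red $\sigma\oplus(\tau\ominus 1)$ and the copy $C_\star$ are a priori independent pieces of combinatorial data, and the four ingredients of the target pattern---a $\sigma$-part at low-left, a $\tau$-part just to its right with larger values, a pivot, and a $\rho$-part above the $\tau$ and to the right of the pivot---have to be assembled so that all positional and value constraints fit simultaneously. The bulk of the proof is a careful case analysis showing that a compatible quadruple of witnesses can always be extracted, so that the coexistence of a red $\sigma\oplus(\tau\ominus 1)$ and of the copy $C_\star$ forces the forbidden pattern into $\pi$.
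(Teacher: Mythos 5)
Your colouring is a \emph{static} (non-adaptive) rule, and it does not in general produce the required merge, so the ``harder direction'' you outline cannot be completed: it is an attempt to prove something false. Take $\sigma=\tau=\rho=1$, so the forbidden pattern is $1324$ and $(\tau\ominus 1)\oplus\rho=213$. Consider $\pi=21354$, which avoids $1324$. Under your rule a letter is red iff it is the $\rho$-letter (the ``3'') of some $213$-occurrence. The only $213$-occurrences in $\pi$ sit at the position-triples $(1,2,3)$, $(1,2,4)$, $(1,2,5)$, so the red positions are $\{3,4,5\}$, carrying the values $3,5,4$. But $354$ is a copy of $132=\sigma\oplus(\tau\ominus 1)$, so the red part fails to avoid $\sigma\oplus(\tau\ominus 1)$. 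The ``dual'' colouring you allude to (blue iff the letter is a $\sigma$-letter of some $\sigma\oplus(\tau\ominus 1)$-occurrence) fails on the same permutation: blue would then be the values $2,1,3$ at positions $1,2,3$, which is a copy of $213=(\tau\ominus 1)\oplus\rho$.

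The paper instead uses an \emph{adaptive}, left-to-right greedy colouring: $\pi_i$ is blue if colouring it red would complete a \emph{red} occurrence of $\sigma\oplus(\tau\ominus 1)$, \emph{or} if some earlier blue element is smaller than $\pi_i$; otherwise it is red. The first clause makes the red part avoid $\sigma\oplus(\tau\ominus 1)$ by construction, exactly as you want; the crucial extra ingredient is the second clause, which has no analogue in your proposal and is what forces the blue part to avoid $(\tau\ominus 1)\oplus\rho$. In the paper's argument, if blue contains a copy of $(\tau\ominus 1)\oplus\rho$ with pivot $\pi_t$, one takes the \emph{smallest-valued} blue $\pi_s$ with $s\le t$ and $\pi_s\le\pi_t$; by minimality $\pi_s$ was blued because of the first clause, so it closes a red copy of $\sigma\oplus(\tau\ominus 1)$, and a short two-case analysis stitches this red copy together with the blue occurrence into a copy of the full forbidden pattern in $\pi$. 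The adaptivity is essential: on $\pi=21354$ the greedy rule reds $2,1,3,5$ and blues only the final $4$, a partition no static ``participates-in-a-copy'' rule reproduces.
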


\begin{proof} We may assume that $\sigma$ and $\rho$ are nonempty, otherwise the lemma holds trivially.
  Let $\pi=\pi_1\dotsb \pi_n$ be a permutation.  Successively color the
  $\pi_i$, in the order $\pi_1,\pi_2,\dots,\pi_n$, red or blue according to the
  following rule:
  \begin{quotation}
    If coloring $\pi_i$ red completes a red occurrence of
    $\sigma\oplus(\tau\ominus 1)$, or if there already is a blue
    element smaller than $\pi_i$, then color $\pi_i$ blue; otherwise
    color $\pi_i$ red.
  \end{quotation}
  Note that the first element, $\pi_1$, will always be colored
  red. Further, the red elements clearly avoid
  $\sigma\oplus(\tau\ominus 1)$. We claim that if $\pi$ avoids
  $\sigma\oplus(\tau\ominus 1)\oplus\rho$ then the blue elements avoid
  $(\tau\ominus 1)\oplus\rho$, and we proceed by proving the
  contrapositive statement. Assume that there is a blue occurrence of
  $(\tau\ominus 1)\oplus\rho$. Let $\tau_B$, $1_B$, and $\rho_B$ be
  the three sets of blue elements corresponding to the three parts
  $\tau$, $1$ and $\rho$ forming the occurrence.  In particular, $1_B$
  contains a single element, which will be denoted by~$\pi_t$.

  Fix a blue element $\pi_s$ such that $s\le t$, $\pi_s\le \pi_t$, and $\pi_s$
  is as small as possible with these properties. This means that $\pi_s$
  was colored blue for the reason that coloring it red would have
  completed a red occurrence of $\sigma\oplus(\tau\ominus 1)$.
  Therefore, $\pi_s$ is the rightmost element of an occurrence of
  $\sigma\oplus(\tau\ominus 1)$, and all other elements of this
  occurrence are red.  Let $\sigma_R$ and $\tau_R$ be the sets of
  elements corresponding to $\sigma$ and $\tau$ in this occurrence.

  We now distinguish two cases depending on the relative position of
  $\tau_B$ and $\sigma_R$. If all the elements of $\sigma_R$ precede
  all the elements of $\tau_B$ (including the case $\tau=\emptyset$),
  then $\sigma_R\cup\tau_B\cup 1_B\cup \rho_B$ forms an occurrence of
  $\sigma\oplus(\tau\ominus 1)\oplus\rho$. This is because each
  element of $\sigma_R$ is smaller than $\pi_s$, which, in turn, is at
  most as large as~$\pi_t$.

  Suppose now that at least one element of $\sigma_R$ is to the right
  of the leftmost element of $\tau_B$. Then all the elements of
  $\tau_R$ are to the right of the leftmost element
  of~$\tau_B$. Consequently, all the elements of $\tau_R$ are smaller
  than the leftmost element of $\tau_B$, otherwise they would be
  blue. Therefore all elements of $\tau_R$ are smaller than any
  element of $\rho_B$, and $\sigma_R\cup \tau_R\cup\{\pi_s\}\cup
  \rho_B$ is an occurrence of $\sigma\oplus(\tau\ominus 1)\oplus\rho$.
\end{proof}

The special case $\tau=\emptyset$, in the lemma above, corresponds to
an argument by B\'ona~\cite{bona-records}. More importantly, for our
purposes, the special case $\sigma=\tau=\rho=1$ gives a representation 
of any 1324-avoiding permutation as a merge of a 132-avoiding
permutation and a 213-avoiding permutation. For instance, coloring the
1324-avoiding permutation $364251$ we find that the red and blue
elements are $3621$ and $45$, respectively.

To apply Lemma~\ref{lemma:red-blue}, we need an estimate on the number
of permutations obtainable by merging two permutations from given
\emph{permutation classes}.  A permutation class is a set of
permutations $\C$ that is down-closed for the containment relation,
that is, if $\tau\in \C$ and $\tau$ contains $\sigma$, then $\sigma\in
\C$. The \emph{growth rate} of $\C$ is defined as
$\limsup_{n\to\infty}|\C\cap \sym_n|^{1/n}$. As pointed out by
Arratia~\cite{Arratia}, if $\C =\sym(\tau)$ is a principal class, that
is, $\C$ is the set of permutations avoiding a single pattern~$\tau$,
then the $\limsup$ is actually a limit, and the growth rate of $\C$ is
the Stanley-Wilf limit $L(\tau)$ of~$\tau$.

In the following lemma we adapt an argument that has been used by
B\'ona~\cite{bona-records} in a less general setting.

\begin{lemma}\label{lemma:merge}
  Let $\A$, $\B$, and $\C$ be three permutation classes with growth
  rates $\alpha$, $\beta$ and $\gamma$, respectively. If every
  permutation of $\C$ can be expressed as a merge of a permutation
  from $\A$ and a permutation from $\B$, then
  $$
  \sqrt{\gamma}\le \sqrt{\alpha}+\sqrt{\beta}.
  $$
\end{lemma}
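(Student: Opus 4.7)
The plan is to bound $c_n := |\C \cap \sym_n|$ from above by counting all possible merges of a permutation in $\A$ with one in $\B$, and then to show that this count grows at rate $(\sqrt{\alpha}+\sqrt{\beta})^2$. Writing $a_k := |\A \cap \sym_k|$ and $b_k := |\B \cap \sym_k|$, the first step is the observation that a merge producing an element of $\sym_n$ is determined by the choice of a $k$-subset of positions (in $\binom{n}{k}$ ways), a $k$-subset of values (in $\binom{n}{k}$ ways), and the two permutations $\sigma \in \A \cap \sym_k$ and $\tau \in \B \cap \sym_{n-k}$ themselves. Since by hypothesis every element of $\C \cap \sym_n$ admits such a representation (and possibly more than one), this yields
$$c_n \;\le\; \sum_{k=0}^{n} \binom{n}{k}^2 a_k\, b_{n-k}.$$

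Second, for any fixed $\epsilon > 0$, the definition of growth rate supplies a constant $K$ with $a_k \le K(\alpha+\epsilon)^k$ and $b_k \le K(\beta+\epsilon)^k$ for every $k \ge 0$ (the values of $a_k, b_k$ on small $k$ are absorbed into $K$). What remains is to estimate $\sum_k \binom{n}{k}^2 x^k y^{n-k}$ for nonnegative reals $x, y$. The cleanest route I see is a ``square-of-sum'' trick: setting $u_k := \binom{n}{k} x^{k/2} y^{(n-k)/2}$, each $u_k \ge 0$, so $\sum_k u_k^2 \le \bigl(\sum_k u_k\bigr)^2$, while the binomial theorem identifies $\sum_k u_k = (\sqrt{x}+\sqrt{y})^n$. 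Hence $\sum_k \binom{n}{k}^2 x^k y^{n-k} \le (\sqrt{x}+\sqrt{y})^{2n}$.

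Combining the two estimates gives $c_n \le K^2 (\sqrt{\alpha+\epsilon}+\sqrt{\beta+\epsilon})^{2n}$; taking $n$-th roots and the $\limsup$, then letting $\epsilon \to 0^+$, yields $\sqrt{\gamma} \le \sqrt{\alpha}+\sqrt{\beta}$, as required. No step appears to pose a serious obstacle; the one mildly clever point is the inequality $\sum_k u_k^2 \le \bigl(\sum_k u_k\bigr)^2$, which converts the awkward $\binom{n}{k}^2$ weights into a clean binomial expansion and avoids any appeal to Laplace's method or to Stirling's formula. I should also double-check that counting (positions, values, $\sigma$, $\tau$) really does over-count merges rather than under-count them, since any ambiguity there could invalidate the first displayed inequality.
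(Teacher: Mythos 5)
Your proposal is correct and follows essentially the same route as the paper: the bound $c_n \le \sum_k \binom{n}{k}^2 a_k b_{n-k}$, the estimate $\sum_k u_k^2 \le (\sum_k u_k)^2$ applied to $u_k = \binom{n}{k}\sqrt{x}^k\sqrt{y}^{n-k}$, and the binomial theorem are exactly the paper's steps, with only a cosmetic difference in how the $\varepsilon$-slack is placed (the paper uses $K\alpha^n(1+\varepsilon)^n$ rather than $K(\alpha+\epsilon)^n$). Your closing worry about over- versus under-counting is resolved as you'd hope: a tuple (positions, values, $\sigma$, $\tau$) determines the merged permutation uniquely, so the map sending each element of $\C\cap\sym_n$ to one of its representations is injective and the displayed inequality holds.
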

\begin{proof}
  Let $a_n$, $b_n$ and $c_n$ be the numbers of permutations of
  length~$n$ in $\A$, $\B$ and $\C$, respectively. For every
  $\varepsilon>0$, we may fix a constant $K$ such that $a_n\le
  K\alpha^n(1+\varepsilon)^n$ and $b_n\le K\beta^n(1+\varepsilon)^n$
  for each~$n$.

  There are at most $\binom{n}{k}^2$ possibilities to merge a given
  permutation of length $k$ with a given permutation of length $n-k$,
  because we get to choose $k$ positions and $k$ values to be covered
  by the first permutation. We thus have
  \begin{align*}
    c_n \le \sum_{k=0}^n \binom{n}{k}^2 a_k b_{n-k}
    &\le K^2(1+\varepsilon)^n \sum_{k=0}^n \binom{n}{k}^2 \alpha^k \beta^{n-k}\\
    &\le K^2(1+\varepsilon)^n\sum_{k=0}^n
\left(\binom{n}{k}\sqrt{\alpha}^k\sqrt{\beta}^{n-k}\right)^2\\
    &\le
K^2(1+\varepsilon)^n\left(\sum_{k=0}^n\binom{n}{k}\sqrt{\alpha}^k\sqrt{\beta}^{
n-k}\right)^2\\
    &\le K^2(1+\varepsilon)^n\left(\sqrt{\alpha}+\sqrt{\beta}\right)^{2n},
  \end{align*}
  which implies that $\gamma$ is at most
  $(\sqrt{\alpha}+\sqrt{\beta})^2$, as claimed.
\end{proof}

Taking $\sigma=\tau=\rho=1$ in Lemma~\ref{lemma:red-blue}, using the
fact that $L(132)=L(213)=4$, and applying Lemma~\ref{lemma:merge}, we
get the following result.
\begin{corollary}\label{cor:1324}
  The Stanley-Wilf limit of 1324 is at most 16.\qed
\end{corollary}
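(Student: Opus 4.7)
The plan is to apply Lemma~\ref{lemma:red-blue} and Lemma~\ref{lemma:merge} directly to the pattern $1324$ with the simplest nontrivial choice of parameters, namely $\sigma = \tau = \rho = 1$. The first thing I would verify is the pattern decomposition: with these choices, $\tau \ominus 1 = 1 \ominus 1 = 21$, and so
\[
\sigma \oplus (\tau \ominus 1) \oplus \rho \;=\; 1 \oplus 21 \oplus 1 \;=\; 1324,
\]
which is exactly the form required by Lemma~\ref{lemma:red-blue}.

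Next I would read off the two smaller forbidden patterns produced by the lemma, namely $\sigma \oplus (\tau \ominus 1) = 1 \oplus 21 = 132$ and $(\tau \ominus 1) \oplus \rho = 21 \oplus 1 = 213$. Consequently, every $1324$-avoiding permutation is a merge of a $132$-avoiding permutation and a $213$-avoiding permutation, so $\sym(1324)$ is contained in the collection of such merges.

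Finally I would apply Lemma~\ref{lemma:merge} with $\A = \sym(132)$, $\B = \sym(213)$, and $\C = \sym(1324)$. Since $\s_n(132)$ and $\s_n(213)$ both equal the $n$th Catalan number, the growth rates satisfy $\alpha = \beta = L(132) = L(213) = 4$, and Lemma~\ref{lemma:merge} yields $\sqrt{L(1324)} \le \sqrt{4} + \sqrt{4} = 4$, hence $L(1324) \le 16$. There is no genuine obstacle at this stage; the corollary is essentially a one-line combination of the two lemmas, with all of the real content already packed into their proofs (in particular, into the two-coloring argument used to establish Lemma~\ref{lemma:red-blue}).
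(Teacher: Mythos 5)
Your proposal is correct and follows exactly the paper's own argument: setting $\sigma=\tau=\rho=1$ in Lemma~\ref{lemma:red-blue} to express any $1324$-avoider as a merge of a $132$-avoider and a $213$-avoider, then applying Lemma~\ref{lemma:merge} with $\alpha=\beta=4$ to get $\sqrt{L(1324)}\le 4$. No differences from the paper's proof.
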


We may apply Lemmas~\ref{lemma:red-blue} and \ref{lemma:merge} to get
an upper bound for the Stanley-Wilf limit of any layered pattern. Let
$\oplus^n1$ denote the identity permutation $12\dotsb n$, and let
$\ominus^n1$ denote its reverse $n\dotsb21$. By a result of Backelin,
West and Xin~\cite{bwx}, we know that for arbitrary~$\sigma$, the
pattern $(\oplus^n1)\oplus\sigma$ is Wilf equivalent to the pattern
$(\ominus^n1)\oplus\sigma$.

Let $\alpha(\ell_1,\ell_2,\dots,\ell_m)$ denote the Stanley-Wilf limit
of the generic layered permutation $(\ominus^{\ell_1}1)\oplus
(\ominus^{\ell_2}1)\oplus\dotsb\oplus (\ominus^{\ell_m}1)$.

\begin{lemma}\label{lemma:layered}
  For any positive integers $\ell_1,\dots,\ell_m$, we have
  $$
  \alpha(\ell_1,\dots,\ell_m)\le\left(\ell_1 + \ell_m - m + 1 +2\sum_{i=2}^{m-1} \ell_i\right)^2.
  $$
\end{lemma}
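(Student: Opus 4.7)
I would prove the bound by induction on $m$. Let $B(\ell_1,\ldots,\ell_m) = \ell_1 + \ell_m - m + 1 + 2\sum_{i=2}^{m-1}\ell_i$ denote the quantity on the right-hand side of the claimed inequality (inside the square), so the goal is $\sqrt{\alpha(\ell_1,\ldots,\ell_m)} \le B(\ell_1,\ldots,\ell_m)$. To shorten notation I write $P_\ell$ for $\ominus^\ell 1$, so the target pattern is $P_{\ell_1}\oplus\cdots\oplus P_{\ell_m}$.

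For the base case $m=1$, $\alpha(\ell_1) = (\ell_1-1)^2 \le (2\ell_1)^2 = B(\ell_1)^2$. For the base case $m=2$, I would apply Backelin--West--Xin to the first layer, giving $P_{\ell_1}\oplus P_{\ell_2}\sim(\oplus^{\ell_1}1)\oplus P_{\ell_2}$, and then to the trailing layer (valid via the stated BWX combined with the reversal symmetry of Wilf equivalence) to obtain the increasing pattern $\oplus^{\ell_1+\ell_2}1$. Since $L(\oplus^n 1)=(n-1)^2$, this gives $\alpha(\ell_1,\ell_2) = (\ell_1+\ell_2-1)^2 = B(\ell_1,\ell_2)^2$ exactly.

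For the inductive step $m\ge 3$, apply Lemma~\ref{lemma:red-blue} with $\sigma = P_{\ell_1}\oplus\cdots\oplus P_{\ell_{m-2}}$, $\tau = P_{\ell_{m-1}-1}$, and $\rho = P_{\ell_m}$. Since $\tau\ominus 1 = P_{\ell_{m-1}}$, the pattern $\sigma\oplus(\tau\ominus 1)\oplus\rho$ is exactly the target, and the lemma expresses any avoider as a merge of a permutation avoiding $P_{\ell_1}\oplus\cdots\oplus P_{\ell_{m-1}}$ and one avoiding $P_{\ell_{m-1}}\oplus P_{\ell_m}$. Applying Lemma~\ref{lemma:merge}, together with the induction hypothesis on $m-1$ and the $m=2$ base case, yields
$$
\sqrt{\alpha(\ell_1,\ldots,\ell_m)} \le B(\ell_1,\ldots,\ell_{m-1}) + B(\ell_{m-1},\ell_m).
$$
The proof is then completed by the arithmetic identity $B(\ell_1,\ldots,\ell_{m-1}) + B(\ell_{m-1},\ell_m) = B(\ell_1,\ldots,\ell_m)$: the term $\ell_{m-1}$ contributes coefficient $1$ to each summand and thus acquires the coefficient $2$ it carries in $B(\ell_1,\ldots,\ell_m)$, while the constants $-(m-1)+1$ and $-1$ combine to $-m+1$. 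The one delicate point I anticipate is the choice to split at layer $m-1$ specifically; splitting at any other internal layer still gives a valid recursion via Lemma~\ref{lemma:red-blue}, but the sum of bounds will generally exceed $B(\ell_1,\ldots,\ell_m)$, so the tight $m=2$ base case together with this particular split is what makes the telescoping close.
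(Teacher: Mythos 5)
Your proof is correct and essentially identical to the paper's: both induct on $m$ via Lemma~\ref{lemma:red-blue} and Lemma~\ref{lemma:merge}, you just split off the last two layers (keeping $\ell_1,\dots,\ell_{m-1}$ plus $\ell_{m-1},\ell_m$) whereas the paper splits off the first two, a cosmetic mirror-image. One small slip in your closing remark: splitting at any interior layer $j$ in fact gives the identity $B(\ell_1,\dots,\ell_j)+B(\ell_j,\dots,\ell_m)=B(\ell_1,\dots,\ell_m)$, so every choice of split telescopes equally well — but this does not affect the validity of your argument.
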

\begin{proof}
  We proceed by induction on $m$. For $m\le 2$, the bound follows from
  the known fact~\cite{Regev} that $\oplus^k1$ has Stanley-Wilf limit
  $(k-1)^2$. Assume now that $m\ge 3$. Combining
  Lemma~\ref{lemma:red-blue} and Lemma~\ref{lemma:merge}, we see that
  \begin{align*}
    \sqrt{\alpha(\ell_1,\dots,\ell_m)}
    &\le \sqrt{\alpha(\ell_1,\ell_2)}+\sqrt{\alpha(\ell_2,\dots,\ell_m)} \\
    &\le (\ell_1+\ell_2-1)+\left(\ell_2 + \ell_m - m + 2 + 2\sum_{i=3}^{m-1} \ell_i\right),
  \end{align*}
  which gives the desired bound.
\end{proof}

\begin{corollary}\label{cor:layered}
  A layered permutation of length $\ell$ has Stanley-Wilf limit at
  most $4\ell^2$.
\end{corollary}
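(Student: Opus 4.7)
The plan is to reduce the corollary to a direct algebraic simplification of the estimate provided by Lemma~\ref{lemma:layered}. Given a layered pattern of length $\ell$, I would first express it in the canonical form $(\ominus^{\ell_1}1)\oplus\cdots\oplus(\ominus^{\ell_m}1)$ with positive integers $\ell_1,\dots,\ell_m$ satisfying $\ell_1+\cdots+\ell_m=\ell$. Lemma~\ref{lemma:layered} then yields
$$
\alpha(\ell_1,\dots,\ell_m) \le \left(\ell_1+\ell_m-m+1+2\sum_{i=2}^{m-1}\ell_i\right)^2.
$$

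Next I would simplify the inner quantity by substituting $\sum_{i=2}^{m-1}\ell_i = \ell-\ell_1-\ell_m$, which rewrites it as $2\ell-\ell_1-\ell_m-m+1$. Since $\ell_1\ge 1$, $\ell_m\ge 1$, and $m\ge 1$, the subtracted quantity $\ell_1+\ell_m+m-1$ is nonnegative, so the inner expression is bounded above by $2\ell$, and squaring delivers the desired bound $4\ell^2$.

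The only subtlety is the degenerate case $m=1$, where the sum $\sum_{i=2}^{m-1}$ is empty and one should interpret $\ell_m$ as $\ell_1$. In this situation the formula gives the looser bound $(2\ell_1)^2=4\ell^2$, which still matches the target; alternatively one can invoke Regev's exact value $(\ell-1)^2$. No step requires any real ingenuity beyond the bookkeeping, since the hard work has already been absorbed into Lemma~\ref{lemma:layered}; the content of the corollary is essentially the observation that the coefficient of $\ell$ in the simplified inner expression is exactly $2$, so the square of the bound grows like $4\ell^2$ rather than something larger.
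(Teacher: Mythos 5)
Your proof is correct and is exactly the short algebraic deduction the paper leaves implicit: the paper states Corollary~\ref{cor:layered} without a separate proof, treating it as an immediate consequence of Lemma~\ref{lemma:layered}. Your substitution $\sum_{i=2}^{m-1}\ell_i=\ell-\ell_1-\ell_m$ giving the inner expression $2\ell-(\ell_1+\ell_m+m-1)\le 2\ell$, together with the separate (trivial) handling of $m=1$, is precisely the bookkeeping the authors expect the reader to supply.
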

As we pointed out in the introduction, any layered pattern of length
$\ell$ has Stanley-Wilf limit at least $(\ell-1)^2$, so the quadratic
bound in the previous corollary is best possible.

\section{On 1324-avoiding permutations with a fixed number of
  inversions}
\label{sec:conjectures}

An \emph{inversion} in a permutation $\pi=\pi_1\pi_2\dotsb \pi_n$ is a
pair $(i,j)$ such that $1\le i<j\le n$ and $\pi_i>\pi_j$. The number
of inversions in $\pi$ is denoted $\inv(\pi)$. In this section we will
consider the distribution of inversions over $1324$-avoiding
permutations. We will show that a certain conjectured property of this
distribution implies an improved upper bound for $L(1324)$. Recall that 
$\snk{n}{k}(\tau)$ is the number of $\tau$-avoiding permutations of length $n$ with $k$ inversions.

To illustrate our approach, and to introduce tools we use later, we
will first derive an upper bound on $L(132)$ (even though we know that
$L(132)=4$). Here are the first few rows of the distribution of
inversions over $\sym(132)$, where the $k$th entry in the $n$th row is
the number $\snk{n}{k}(132)$:
$$
\begin{tikzpicture}
  \matrix (A) [matrix of math nodes, row sep=0.2mm, column sep=0.2mm] {
    \g{1}\\
    \g{1}&\g{1}\\
    \g{1}&\g{1}&\g{2}&1\\
    \g{1}&\g{1}&\g{2}&\g{3}&3&3&1\\
    \g{1}&\g{1}&\g{2}&\g{3}&\g{5}&5&7&7&6&4&1\\
    \g{1}&\g{1}&\g{2}&\g{3}&\g{5}&\g{7}&9&11&14&16&16&17&14&10&5&1\\
    \g{1}&\g{1}&\g{2}&\g{3}&\g{5}&\g{7}&\g{11}&13&18&22&28&32&37&40&44&43&\dots\\
    \g{1}&\g{1}&\g{2}&\g{3}&\g{5}&\g{7}&\g{11}&\g{15}&20&26&34&42&53&63&73&85&\dots\\
  };
\end{tikzpicture}
$$
We make two observations: (1) the columns are weakly increasing when
read from top to bottom; (2) each column is eventually constant, as shown by the grayed area. If we can prove this and give a formula for the eventual value $c(k)$ of the $k$-th column, then we can bound $\s_n(132)$ by $\sum_{k\le\binom{n}{2}} c(k)$. For instance, $\s_4(132) \leq
1+1+2+3+5+7+11 = 30$.

To prove that the columns are weakly increasing is easy: the map
$\pi \mapsto \pi\oplus 1$ from $\sym_{n-1}(132)$ to $\sym_n(132)$ is
injective and inversion-preserving. Our goal is to show that each column is eventually constant and to find the formula for the eventual value of $k$-th column.

\begin{lemma}\label{lemma:comp+inv}
  Let $\pi\in\sym_n$ and let $c$ be the number of components of
  $\pi$. 
  Then
  $$\inv(\pi) \geq n - c.
  $$
\end{lemma}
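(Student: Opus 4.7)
My plan is to prove the lemma by sorting $\pi$ to the identity through adjacent transpositions and tracking how the number of components changes along the way. Bubble-sorting $\pi$ removes one inversion per swap, so the identity permutation is reached after exactly $\inv(\pi)$ adjacent transpositions. The identity $12\cdots n$ has $n$ components while $\pi$ has $c$ by assumption, so the total change in component count is $n-c$. If each individual swap changes the number of components by at most one, the telescoping bound $\inv(\pi)\ge n-c$ follows immediately.

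The only real work is therefore the per-swap bound. I would observe that a permutation $\rho\in\sym_n$ has $c$ components precisely when the set of decomposition positions $\{m\in\{1,\dots,n-1\}:\{\rho(1),\dots,\rho(m)\}=\{1,\dots,m\}\}$ has size $c-1$. Swapping the entries at positions $i$ and $i+1$ leaves the prefix set $\{\rho(1),\dots,\rho(m)\}$ unchanged for every $m\ne i$: for $m<i$ the prefix is untouched, and for $m\ge i+1$ the prefix contains both swapped entries. Thus only the status of position $i$ as a decomposition position can flip, and the number of decomposition positions (hence the component count) changes by at most one.

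The main obstacle is just the bookkeeping in the second paragraph; once that observation is in hand, the lemma is immediate. An alternative route is to reduce first to the indecomposable case using the additivity $\inv(\alpha_1\oplus\cdots\oplus\alpha_c)=\sum_i\inv(\alpha_i)$ and then show by induction on $k$ that every indecomposable permutation of length $k$ has at least $k-1$ inversions. That version requires extra analysis of how the position of the smallest element constrains indecomposability and seems strictly clunkier than the sorting argument, so I would present the sorting argument as the main proof.
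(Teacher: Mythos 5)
Your proposal is correct, and it takes a genuinely different route from the paper. The paper argues by induction on $n$ with a case split: if $c>1$ it uses additivity of $\inv$ over components together with the inductive hypothesis; if $c=1$ it deletes the largest letter $n$ and uses the fact that, for $\pi$ to remain indecomposable, $n$ must sit inside the first component of the deleted permutation, which forces enough inversions. The alternative you dismiss at the end as ``strictly clunkier'' is, in spirit, exactly the paper's proof. Your sorting argument is cleaner: each adjacent transposition removes exactly one inversion, and your decomposition-position characterization correctly shows that such a swap can only flip the breakpoint status at position $i$, so the component count changes by at most one per step; since the identity has $n$ components and $\pi$ has $c$, telescoping gives $\inv(\pi)\ge n-c$. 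The one thing worth making explicit in a write-up is that the bound uses $c\le n$, so $n-c\ge 0$ and the triangle inequality applies in the needed direction; with that noted, your argument is complete and arguably more transparent than the paper's.
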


\begin{proof}
  We use induction on $n$. The case $n=1$ is trivial. Assume $n>1$ and
  write $\pi$ as the sum of its components $\pi = \alpha_1 \oplus
  \dots \oplus \alpha_c$.  Note that if $(i,j)$ is an inversion in
  $\pi=\pi_1\pi_2\dotsb \pi_n$ then $\pi_i$ and $\pi_j$ must belong to
  the same component of $\pi$.  Thus, if $c>1$, we have
  \begin{align*}
    \inv(\pi) 
    &= \inv(\alpha_1) +\dotsb+ \inv(\alpha_c) \\
    &\geq  |\alpha_1|-1  +\dotsb+  |\alpha_c|-1 &\text{by induction}\\
    &= n-c.
  \end{align*}
  Assume then that $c=1$ and let $i$ be the position of $n$ in $\pi$,
  that is, $\pi_i=n$. Also, let $\sigma\in\sym_{n-1}$ be the permutation
  obtained by removing $n$ from $\pi$. Although $\sigma$ can be any
  permutation, the number $i$ has some restrictions. Obviously, $i\leq
  |\sigma|=n-1$, since if $i=n$, then $n$ would constitute a component
  of its own, contradicting the assumption that $c=1$. More generally,
  if we decompose $\sigma$ into its components
  $$\sigma = \beta_1 \oplus \dots \oplus \beta_{d},
  $$
  then we see that $i\leq|\beta_1|$. Thus
  \begin{align*}
    \inv(\pi) 
    &= \inv(\sigma) + n-i \\
    &\geq |\sigma|-d + n-i &\text{by induction} \\
    &=n-1-i + n-d          &\text{since $|\sigma|=n-1$}\\
    &\geq n-1-i+|\beta_1|  &\text{since $n-d\geq |\beta_1|$}\\
    &\geq n-1,             &\text{since $i \leq |\beta_1|$}
  \end{align*}
  as claimed.
\end{proof}

\begin{definition}
  The \emph{inversion table} of a permutation $\pi=\pi_1\dotsb\pi_n$
  is the sequence $b_1b_2\dotsb b_n$, where $b_i$ is the number of
  letters in $\pi$ to the right of $\pi_i$ that are smaller than
  $\pi_i$.
\end{definition}

For example, the inversion table of 352614 is 231200.  Clearly, the
number of inversions in a permutation equals the sum of the entries in
the inversion table. It is also easy to see that the map taking a
permutation to its inversion table is a bijection.  In other words, a
permutation can be reconstructed from this table.

\begin{lemma}
  A permutation avoids the pattern 132 if and only if its inversion
  table is weakly decreasing.
\end{lemma}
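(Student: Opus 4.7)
The plan is to compare consecutive entries of the inversion table at every position. Writing $S_i=\{j>i:\pi_j<\pi_i\}$ so that $b_i=|S_i|$, I will show that the inequality $b_i\ge b_{i+1}$ at a given $i$ is governed by whether a 132 pattern is allowed to appear starting at position $i$.

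For the forward direction, fix $i$ and split on the sign of $\pi_{i+1}-\pi_i$. If $\pi_i>\pi_{i+1}$, then $i+1\in S_i$ and any $j>i+1$ with $\pi_j<\pi_{i+1}$ also satisfies $\pi_j<\pi_i$, so $S_{i+1}\cup\{i+1\}\subseteq S_i$ disjointly and $b_i\ge b_{i+1}+1$; this case uses no hypothesis on patterns. If instead $\pi_i<\pi_{i+1}$, then any $j>i+1$ with $\pi_j<\pi_{i+1}$ must in fact satisfy $\pi_j<\pi_i$, since otherwise the chain $\pi_i<\pi_j<\pi_{i+1}$ at positions $(i,i+1,j)$ would give a 132 occurrence. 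Hence $S_{i+1}\subseteq S_i$ and $b_i\ge b_{i+1}$.

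For the converse I argue the contrapositive: if $\pi$ contains a 132 at positions $a<b<c$ with $\pi_a<\pi_c<\pi_b$, then the inversion table fails to be weakly decreasing. The right witness is $i=\max\{k\in\{a,\dotsc,b-1\}:\pi_k<\pi_c\}$, which exists because $a$ qualifies. Maximality of $i$ together with $i+1\le b<c$ rules out $\pi_{i+1}=\pi_c$, so $\pi_{i+1}>\pi_c>\pi_i$; in particular $\pi_i<\pi_{i+1}$, placing us in the ascent case of the forward argument, which gives $S_i\subseteq S_{i+1}$. But $c\in S_{i+1}\setminus S_i$, since $\pi_c<\pi_{i+1}$ yet $\pi_c>\pi_i$. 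Therefore $b_i<b_{i+1}$, contradicting weak decrease.

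Neither direction is technically hard; the only substantive choice is how to pick the witness $i$ in the converse. Selecting $i$ too close to $b$ or $c$ fails because adjacent positions need not straddle a pattern violation, but taking the \emph{rightmost} position in $[a,b-1]$ whose value sits below $\pi_c$ is precisely what arranges for $c$ itself to be the element that $S_{i+1}$ contains while $S_i$ does not.
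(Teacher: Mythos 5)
Your proof is correct and follows essentially the same approach as the paper: the forward direction by splitting on whether position $i$ is an ascent or descent, and the converse by choosing an extremal witness index from a 132 occurrence so that $\pi_i < \pi_c < \pi_{i+1}$ forces $b_i < b_{i+1}$ (the paper maximizes the first index over all occurrences, while you locally maximize within a fixed occurrence, which is a cosmetic variation). One small imprecision: in the converse you attribute $S_i \subseteq S_{i+1}$ to ``the ascent case of the forward argument,'' but that case established $S_{i+1}\subseteq S_i$ using avoidance; the inclusion you actually need, $S_i \subseteq S_{i+1}$, is a separate (and unconditional) consequence of $\pi_i<\pi_{i+1}$, so the step is fine but the cross-reference is misleading.
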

\begin{proof}
  Let $\pi=\pi_1\pi_2\dotsb \pi_n$ be a permutation with
  inversion table $b_1b_2\dotsb b_n$. Suppose that $b_i<b_{i+1}$ for
  some $i$.  Then we must have $\pi_i<\pi_{i+1}$, and the number of
  letters to the right of $\pi_{i+1}$ that are smaller than $\pi_{i+1}$ is
  greater than the number of such letters that are smaller than $\pi_i$.
  Thus there is a $j>i+1$ such that $\pi_i<\pi_j<\pi_{i+1}$.  But then
  $\pi_i\pi_{i+1}\pi_j$ form the pattern 132. 

  Conversely, assume that $\pi_i\pi_j\pi_k$ is an occurrence of 132 in
  $\pi$, and assume that the occurrence has been chosen in such a way
  that the index $i$ is as large as possible. This choice implies that
  $\pi_{i+1}$ is greater than $\pi_k$, and consequently,
  $b_{i+1}>b_i$.
\end{proof}

A \emph{partition} of an integer $k$ is a weakly decreasing sequence
of positive integers whose sum is~$k$. By dropping the trailing zeros
from the inversion table of a permutation $\pi\in\symnk{n}{k}(132)$ we
obtain a partition $\lambda$ of~$k$. We then say that $\lambda$
\emph{represents}~$\pi$. For instance, the inversion table of
$\pi=65723148$ is 5441100, so $\pi$ is represented by the integer
partition $5+4+4+1+1$. Two distinct 132-avoiding permutations of the same size are represented by distinct integer partitions. On the other hand, a
permutation $\pi\in\sym_n(132)$ is represented by the same partition
as $\pi\oplus 1$. 

In any $132$-avoiding permutation $\pi$, only the
first component may have size greater than~1, so $\pi$ has a
decomposition of the form $\sigma\oplus1\oplus1\oplus\dotsb\oplus 1$
where $\sigma$ is an indecomposable permutation represented by the
same partition as~$\pi$. It is easy to see that a partition $\lambda$ of an integer $k$ represents a unique indecomposable permutation~$\sigma$, and by Lemma~\ref{lemma:comp+inv}, $\sigma$ has size at most~$k+1$.
Consequently, for every $n\ge k+1$, $\lambda$ represents a
unique permutation $\pi$ of size~$n$. This yields the following
result.

\begin{proposition}\label{prop:132}
  For every $k<n$, we have $\snk{n}{k}(132) = p(k)$, where $p(k)$ is
  the number of integer partitions of $k$.
\end{proposition}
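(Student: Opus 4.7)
The plan is to verify that the correspondence $\pi \mapsto \lambda(\pi)$ introduced in the preceding paragraph is a bijection between $\symnk{n}{k}(132)$ and the set of partitions of $k$ whenever $n > k$. That the map is well-defined is immediate: the inversion table of a 132-avoider is weakly decreasing by the previous lemma, and its entries sum to $\inv(\pi) = k$, so stripping trailing zeros yields a weakly decreasing sequence of positive integers summing to $k$, i.e.\ a partition of $k$.

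I would then verify injectivity and surjectivity separately. Injectivity is straightforward: if two permutations in $\symnk{n}{k}(132)$ have the same associated partition, then their inversion tables, both of length $n$, coincide (they agree on the partition part and agree on the run of trailing zeros needed to reach length $n$), and a permutation is uniquely recoverable from its inversion table.

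For surjectivity, given $\lambda \vdash k$, I would invoke the existence and uniqueness of the indecomposable 132-avoiding permutation $\sigma$ representing $\lambda$, noted in the text just before the statement; Lemma~\ref{lemma:comp+inv} bounds $|\sigma| \le k+1 \le n$, so we may set $\pi := \sigma \oplus 1 \oplus \cdots \oplus 1$ with $n - |\sigma|$ trailing singletons. The operation $\oplus 1$ adds no inversions and preserves 132-avoidance, so $\pi \in \symnk{n}{k}(132)$, and by construction $\lambda(\pi) = \lambda(\sigma) = \lambda$.

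The only substantive step is the existence-and-uniqueness of the indecomposable $\sigma$ representing a given $\lambda$. This reduces to checking, via the inversion-table characterization of 132-avoidance, that there is a unique length $m$ for which padding $\lambda_1 \cdots \lambda_r$ with trailing zeros yields an admissible inversion table (i.e.\ $\lambda_i \le m - i$ for every $i$) whose associated permutation is indecomposable; admissibility forces $m \ge \max_i(\lambda_i + i)$, and Lemma~\ref{lemma:comp+inv} gives $m \le k + 1$. Once this point is settled, the proposition is a mechanical repackaging of the structural observations already assembled.
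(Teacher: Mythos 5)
Your proof is correct and follows essentially the same route as the paper, which presents the argument informally in the paragraph preceding the proposition: the inversion-table characterization of 132-avoidance, the bijection between 132-avoiders and integer partitions obtained by stripping trailing zeros, the fact that a partition of $k$ represents a unique indecomposable permutation of size at most $k+1$ (via Lemma~\ref{lemma:comp+inv}), and padding by $\oplus 1$ to reach length~$n$. Your final paragraph makes the existence-and-uniqueness of the indecomposable representative slightly more explicit than the paper's ``it is easy to see,'' but the two treatments are otherwise the same.
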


The following rather elementary upper bound for $p(k)$ can, for
example, be found in ~\cite{apostol}, pp 316--318.

\begin{lemma}\label{lemma:partitions}
  Let $p(k)$ be the number of integer partitions of $k$. For $k>0$ we have
  $$ p(k) <  \rho^{\sqrt{k}},
  $$
  where $\rho=e^{\pi\sqrt{\frac{2}{3}}}\simeq 13.001954$.
\end{lemma}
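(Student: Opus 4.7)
The plan is the classical generating-function bound for partitions. Let $F(x) = \sum_{k \ge 0} p(k) x^k = \prod_{n \ge 1} (1-x^n)^{-1}$. Since all coefficients are nonnegative, for every $x \in (0,1)$ we have $p(k) x^k \le F(x)$, whence
$$\log p(k) \le \log F(x) - k \log x.$$
The strategy is to bound the right-hand side and then optimize the choice of $x$ as a function of $k$.

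To estimate $\log F(x)$, I would expand $-\log(1-x^n) = \sum_{m \ge 1} x^{nm}/m$ and swap the order of summation, obtaining
$$\log F(x) = \sum_{m \ge 1} \frac{1}{m} \cdot \frac{x^m}{1-x^m}.$$
For $x \in (0,1)$ the elementary identity $1 - x^m = (1-x)(1 + x + \cdots + x^{m-1}) \ge m(1-x)x^{m-1}$ gives $x^m/(1-x^m) \le x/(m(1-x))$, and so by Euler's identity $\sum_{m \ge 1} m^{-2} = \pi^2/6$ we get
$$\log F(x) \le \frac{\pi^2 x}{6(1-x)}.$$

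The final step is to substitute $x = e^{-t}$ with $t > 0$, so that $-\log x = t$ and $x/(1-x) = 1/(e^t - 1) < 1/t$. This produces the strict inequality
$$\log p(k) < \frac{\pi^2}{6t} + k t.$$
Minimizing the right-hand side over $t > 0$ yields the optimal choice $t = \pi/\sqrt{6k}$, at which its value equals $2\pi\sqrt{k/6} = \pi\sqrt{2k/3}$. Exponentiating gives $p(k) < e^{\pi\sqrt{2/3}\sqrt{k}} = \rho^{\sqrt{k}}$, as required.

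Since this is a textbook estimate, there is no real obstacle; the only care needed is to keep the two elementary inequalities ($1-x^m \ge m(1-x)x^{m-1}$ and $e^t - 1 > t$) strict so that the final bound remains strict, and to recognize that the constant $\pi\sqrt{2/3}$ in the exponent arises precisely as $2\sqrt{\pi^2/6}$ from the coincidence of Euler's sum with the coefficient appearing in the bound on $\log F$.
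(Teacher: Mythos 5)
Your proof is correct and is precisely the classical generating-function argument found in Apostol (pp.\ 316--318), which is what the paper cites in lieu of giving its own proof. All steps check out: the bound $\log F(x) \le \pi^2 x / (6(1-x))$ from $1-x^m \ge m(1-x)x^{m-1}$ and Euler's sum, the substitution $x=e^{-t}$ with $e^t-1>t$ to obtain strictness, and the optimization $t=\pi/\sqrt{6k}$ yielding $2\pi\sqrt{k/6}=\pi\sqrt{2k/3}$.
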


Letting $m=\binom{n}{2}$, we thus have
\begin{align*}
  \s_n(132) =\sum_{k=0}^{m} \snk{n}{k}(132)
  &\leq (m+1) \snk{m+1}{m} \\[-2ex]
  & = (m+1) p(m+1) < (m+1) e^{\pi\sqrt{\frac{2(m+1)}{3}}},
\end{align*}
and
\begin{align*}
  L(132) = \lim_{n\to\infty} (\s_n(132))^{1/n}
  & \leq \lim_{n\to\infty}\!\left(n^2/2+O(n)\!\right)^{\! 1/n}e^{\frac{\pi}{n}\sqrt{\frac{2}{3}\left(\frac{n^2}{2}+O(n)\!\right)}}\\
  &= e^{\frac{\pi}{\sqrt{3}}} \simeq 6.1337.
\end{align*}

We have now seen that counting 132-avoiding permutations with
relatively few inversions leads to an upper bound for $L(132)$. Can we
similarly bound $L(1324)$? These are the first few rows of the
distribution of inversion over $\sym(1324)$:
$$
\begin{tikzpicture}
  \matrix (A) [matrix of math nodes, row sep=0.2mm, column sep=0.2mm] {
    1\\
    \g{1}& 1\\ 
    \g{1}& \g{2}& 2& 1\\
    \g{1}& \g{2}& \g{5}& 6& 5& 3& 1\\
    \g{1}& \g{2}& \g{5}& \g{10}& 16& 20& 20&  15&   9&   4&   1\\
    \g{1}& \g{2}& \g{5}& \g{10}& \g{20}& 32& 51&  67&  79&  80&  68&  49&   29& \dots\\
    \g{1}& \g{2}& \g{5}& \g{10}& \g{20}& \g{36}& 61&  96& 148& 208& 268& 321&  351& \dots\\
    \g{1}& \g{2}& \g{5}& \g{10}& \g{20}& \g{36}& \g{65}& 106& 171& 262& 397& 568&  784& \dots\\
    \g{1}& \g{2}& \g{5}& \g{10}& \g{20}& \g{36}& \g{65}& \g{110}& 181& 286& 443& 664&  985& \dots\\
    \g{1}& \g{2}& \g{5}& \g{10}& \g{20}& \g{36}& \g{65}& \g{110}& \g{185}& 296& 467& 714& 1077& \dots\\
    \g{1}& \g{2}& \g{5}& \g{10}& \g{20}& \g{36}& \g{65}& \g{110}& \g{185}& \g{300}& 477& 738& 1127& \dots\\
    \g{1}& \g{2}& \g{5}& \g{10}& \g{20}& \g{36}& \g{65}& \g{110}& \g{185}& \g{300}& \g{481}& 748& 1151& \dots\\
  };
\end{tikzpicture}
$$
Again, it seems as though (1) the columns are increasing when read
from top to bottom; (2) each column is eventually constant. Unfortunately, we have not been able to show that the columns are increasing, but we conjecture that they are:

\begin{conjecture}[Increasing columns]\label{increasing-columns}
  For all nonnegative integers $n$ and $k$, we have
  $\snk{n}{k}(1324)\le\snk{n+1}{k}(1324)$.
\end{conjecture}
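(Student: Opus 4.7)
The plan is to construct an explicit injection $f\colon \snk{n}{k}(1324) \to \snk{n+1}{k}(1324)$. The main tool is that for any (possibly trivial) decomposition $\pi = \sigma \oplus \tau$, the permutation $\sigma \oplus 1 \oplus \tau$, obtained by inserting a single new element at the decomposition point, has exactly the same number of inversions as $\pi$. A short case analysis on how a putative $1324$ pattern can distribute among the three blocks shows that $\sigma \oplus 1 \oplus \tau$ avoids $1324$ if and only if $\sigma$ avoids $132$ and $\tau$ avoids $213$. This yields three inversion- and $1324$-avoidance-preserving operations ready to use: $\pi \mapsto \pi \oplus 1$ whenever $\pi$ avoids $132$; $\pi \mapsto 1 \oplus \pi$ whenever $\pi$ avoids $213$; and $\pi \mapsto \sigma \oplus 1 \oplus \tau$ whenever $\pi$ admits a non-trivial decomposition with $\sigma \in \sym(132)$ and $\tau \in \sym(213)$.

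The first step is to use these three operations to define $f$ on as large a subset as possible. To keep $f$ well defined and injective, I would fix a priority order (prefer $\pi \oplus 1$ first, then $1 \oplus \pi$, then the compatible non-trivial decomposition minimising $|\sigma|$) and verify that from the output one can recover both the case and the input: the trailing $n+1$ together with the preimage avoiding $132$ identifies the first case, a leading $1$ otherwise identifies the second, and in the third case the canonical middle component of the output identifies the inserted element and hence $\pi$. A short case check is needed to ensure the priority rule produces disjoint images across cases, but this is routine.

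The main obstacle is the residual case: permutations that contain both $132$ and $213$ and admit no direct-sum decomposition $\sigma \oplus \tau$ with $\sigma \in \sym(132)$ and $\tau \in \sym(213)$. Indecomposable examples such as $3142$ and $2413$ show this class is non-empty already at length four. For such $\pi$, the analysis above implies that no inversion-preserving insertion of a single element at any position produces a $1324$-avoiding permutation, so a fundamentally different transformation is required.

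My best candidate for handling the residual case is to exploit the merge decomposition of Lemma~\ref{lemma:red-blue} (specialised to $\sigma = \tau = \rho = 1$), which canonically colors $\pi$ into a $132$-avoiding red subpermutation $\pi_R$ and a $213$-avoiding blue subpermutation $\pi_B$. The goal would be to extend one of the color classes by a single new element in a way that preserves the inversion count, preserves the pattern-avoidance of each color, and yields a $1324$-avoiding merge overall. Making this extension canonical and injective on the residual case, and showing that its image is disjoint from the images of the earlier cases, is where I expect the real difficulty to lie; it may well require a genuinely new combinatorial idea rather than a direct adaptation of the insertion framework above.
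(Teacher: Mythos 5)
The statement you are attempting is an open conjecture: the paper explicitly says the authors ``have not been able to show that the columns are increasing,'' so there is no proof of record to compare your attempt against, and your attempt does not supply one either. That said, your partial analysis is sound as far as it goes. The equivalence you state --- that $\sigma\oplus 1\oplus\tau$ avoids $1324$ if and only if $\sigma$ avoids $132$ and $\tau$ avoids $213$ --- is correct, and single inversion-preserving insertions are indeed in bijection with direct-sum split points of the target permutation, so your three operations genuinely exhaust what can be done by inserting one element without disturbing the inversion count. Moreover, Lemma~\ref{lemma:132+213} in the paper shows that every $\pi\in\symnk{n}{k}(1324)$ with $k<n-1$ has the form $\sigma\oplus 1\oplus\dotsb\oplus 1\oplus\tau$ with $\sigma\in\sym(132)$ and $\tau\in\sym(213)$, so an inversion-preserving insertion is always available in the range $k<n-1$; your residual case is therefore concentrated at $k\ge n-1$.

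The genuine gap is exactly that residual case, and you identify it honestly: indecomposable $1324$-avoiders such as $3142$ and $2413$ contain both $132$ and $213$ and admit no direct-sum decomposition into a $132$-avoider followed by a $213$-avoider, so no single inversion-preserving insertion produces a $1324$-avoider. Your idea of extending one colour class of the red/blue merge from Lemma~\ref{lemma:red-blue} is plausible as a direction, but you have not constructed an actual map: you would need a canonical extension of one colour class by a new element that (i) keeps the red part $132$-avoiding and the blue part $213$-avoiding, (ii) keeps the merged permutation $1324$-avoiding, (iii) preserves the inversion count, (iv) is injective on the residual set, and (v) has image disjoint from the easy cases. None of these steps is routine, and the failure to achieve them is precisely why the statement remains a conjecture. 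Your write-up usefully localises the difficulty (to $k\ge n-1$ and to the non-mergeable indecomposables), but it stops short of a proof, and you should not present it as one.
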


We are, however, able to say what the fixed sequence is. First we
need some lemmas. 

\begin{lemma}\label{lemma:132+213}
  If $\pi\in\snk{n}{k}(1324)$ and $k < n-1$, then
  $$
  \pi = \sigma \oplus 1 \oplus \dots \oplus 1 \oplus \tau
  $$
  for some nonempty permutations $\sigma\in\sym(132)$ and
  $\tau\in\sym(213)$.
\end{lemma}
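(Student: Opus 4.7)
The plan is to use the component decomposition of $\pi$ and exploit the fact that values in earlier components are smaller than values in later components: this essentially gives a "free $1$" on the left and a "free $4$" on the right of any local pattern we find inside a single component.

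First I would invoke Lemma~\ref{lemma:comp+inv} to control the number of components. Writing $\pi = \alpha_1 \oplus \alpha_2 \oplus \dotsb \oplus \alpha_c$, the lemma gives $k = \inv(\pi) \geq n - c$, and the hypothesis $k < n-1$ forces $c \geq 2$. Thus $\pi$ has at least two components, so I can set $\sigma = \alpha_1$ and $\tau = \alpha_c$, both nonempty.

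Next I would show that every middle component $\alpha_i$ with $1 < i < c$ has size one. The argument: if $|\alpha_i| \geq 2$ then $\alpha_i$ is indecomposable of size $\geq 2$, so by Lemma~\ref{lemma:comp+inv} it contains an inversion, giving positions $p<q$ inside $\alpha_i$ with $\pi_p > \pi_q$. Pick any $x$ from $\alpha_1 \cup \dotsb \cup \alpha_{i-1}$ (all such elements are smaller than every entry of $\alpha_i$) and any $y$ from $\alpha_{i+1} \cup \dotsb \cup \alpha_c$ (all larger). Then $x, \pi_p, \pi_q, y$ satisfies $x < \pi_q < \pi_p < y$ in the positional order, which is exactly the pattern $1324$, contradicting that $\pi$ avoids $1324$.

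Finally I would argue that $\sigma$ avoids $132$ and $\tau$ avoids $213$ by the same "free element from an adjacent component" trick. If $\sigma$ contained $132$ on entries $a < c < b$ appearing in this order, any $y$ from $\alpha_2 \cup \dotsb \cup \alpha_c$ is larger than $b$, and appending $y$ yields values $a < c < b < y$ in positional order, i.e., the pattern $1324$. Symmetrically, if $\tau$ contained $213$ on entries $b, a, c$ with $a < b < c$, any $x$ from $\alpha_1 \cup \dotsb \cup \alpha_{c-1}$ is smaller than $a$, and prepending $x$ yields $x < a < b < c$ in positional order, again a $1324$. Both contradict $\pi \in \sym(1324)$, finishing the proof.

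I do not expect a real obstacle here; the only thing to be careful about is ensuring $c \geq 2$ so that the "adjacent component" used in the last step actually exists, which is precisely what the hypothesis $k < n-1$ buys us via Lemma~\ref{lemma:comp+inv}.
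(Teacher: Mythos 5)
Your proof is correct and follows essentially the same approach as the paper: use Lemma~\ref{lemma:comp+inv} to get $c\geq2$, observe that an inversion in a middle component together with a smaller element on the left and a larger one on the right produces a $1324$, and similarly that a $132$ in $\alpha_1$ (or a $213$ in $\alpha_c$) can be extended by an element of a later (resp.\ earlier) component to a $1324$. The paper states the middle step more compactly as ``no inversions outside the first and last component,'' but the underlying argument is identical to yours.
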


\begin{proof}
  Let $c$ be the number of components in $\pi$, and write $\pi = \alpha_1
  \oplus \dotsb \oplus \alpha_c$. Since $k<n-1$ it follows from
  Lemma~\ref{lemma:comp+inv} that $c\geq 2$. There can be no
  inversions in $\pi$ except within the first component and within
  the last component, since otherwise we would have an occurrence of
  $1324$; thus $\alpha_2=\cdots =\alpha_{c-1}=1$. Since the letters in the
  first component have a larger letter to their right, the first
  component must avoid 132 (so that $\pi$ avoids 1324). Likewise, the
  last component must avoid 213.
\end{proof}

Let $\P(m)$ be the set of partitions of an integer~$m$, and let $\Q(m)$ be the
set
\[
\{\, (\lambda,\mu):\, \lambda\in \P(i),\, \mu\in \P(j),\, i+j=m \,\}. 
\]

\begin{proposition}\label{prop:small-inv} 
  For $k < n-1$, there is a one-to-one correspondence between
  $\symnk{n}{k}(1324)$ and $\Q(k)$.
\end{proposition}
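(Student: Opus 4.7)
The plan is to build an explicit bijection $\pi\mapsto(\lambda,\mu)$ between $\symnk{n}{k}(1324)$ and $\Q(k)$, using Lemma~\ref{lemma:132+213} to split $\pi$ into two pieces and then handling each piece by reducing it to the $132$-avoiding case that preceded Proposition~\ref{prop:132}.

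Given $\pi\in\symnk{n}{k}(1324)$ with $k<n-1$, Lemma~\ref{lemma:132+213} produces a decomposition $\pi=\sigma\oplus 1\oplus\cdots\oplus 1\oplus\tau$ in which $\sigma$ and $\tau$ are the first and last components of $\pi$, respectively. In particular $\sigma$ and $\tau$ are indecomposable, $\sigma\in\sym(132)$, $\tau\in\sym(213)$, and both are uniquely determined by $\pi$. As in the discussion preceding Proposition~\ref{prop:132}, the inversion table of the indecomposable $132$-avoider $\sigma$, with trailing zeros dropped, is a partition $\lambda$ of $\inv(\sigma)$. The reverse-complement operation is an $\inv$-preserving involution that interchanges $132$-avoidance with $213$-avoidance and preserves indecomposability, so applying it to $\tau$ and repeating the partition construction yields a partition $\mu$ of $\inv(\tau)$. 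Then $|\lambda|+|\mu|=\inv(\sigma)+\inv(\tau)=\inv(\pi)=k$, so $(\lambda,\mu)\in\Q(k)$.

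For the inverse, let $(\lambda,\mu)\in\Q(k)$ with $|\lambda|=i$ and $|\mu|=j$. The discussion before Proposition~\ref{prop:132} shows that $\lambda$ determines a unique indecomposable $\sigma\in\sym(132)$, and dually $\mu$ (via reverse-complement) determines a unique indecomposable $\tau\in\sym(213)$; let $s$ and $t$ be their sizes. Lemma~\ref{lemma:comp+inv} gives $s\le i+1$ and $t\le j+1$, so $s+t\le k+2\le n$ using $k<n-1$. Setting $\pi:=\sigma\oplus 1^{\,n-s-t}\oplus\tau$ then yields a permutation of length $n$ with $\inv(\pi)=k$. One then checks that $\pi$ avoids $1324$: any $1324$-occurrence would contain an inversion between its second and third positions, and because inversions in $\pi$ live inside $\sigma$ or inside $\tau$, that inversion is confined to one piece; consequently the first three entries of the $1324$ would form a $132$ inside $\sigma$, or the last three a $213$ inside $\tau$, either of which contradicts the chosen avoidance properties.

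Once Lemma~\ref{lemma:132+213} is in hand, the only real subtlety is the arithmetic on component sizes that lets us insert exactly $n-s-t\ge 0$ middle $1$'s, which is where the hypothesis $k<n-1$ combined with the bound $|\sigma|\le\inv(\sigma)+1$ from Lemma~\ref{lemma:comp+inv} is used. Everything else is a routine verification that the two maps are mutually inverse, via the symmetry between the $132$- and $213$-avoiding sides.
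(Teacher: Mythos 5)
Your proposal is correct and follows essentially the same route as the paper's own proof: apply Lemma~\ref{lemma:132+213} to split $\pi$ into an indecomposable $132$-avoider $\sigma$, a run of middle $1$'s, and an indecomposable $213$-avoider $\tau$; read off a partition from $\sigma$'s inversion table; reverse-complement $\tau$ and read off a second partition; and invert using the size bound from Lemma~\ref{lemma:comp+inv} together with $k<n-1$. The only difference is that you spell out the verification that the constructed preimage $\sigma\oplus 1^{n-s-t}\oplus\tau$ avoids $1324$, a step the paper leaves implicit.
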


\begin{proof} Fix $k<n-1$ and choose $\pi\in\symnk{n}{k}(1324)$.  Let
  $c$ be the number of components in $\pi$, and write $\pi = \alpha_1
  \oplus\alpha_2 \oplus \dots \oplus \alpha_c$, where each $\alpha_i$
  is nonempty. By Lemma~\ref{lemma:132+213} we have that
  $\alpha_1\in\sym(132)$, $\alpha_c\in\sym(213)$, and
  $\alpha_2=\cdots=\alpha_{c-1}=1$. Let $\sigma=\alpha_1$ and
  $\tau=\alpha_c$. Let $i=\inv(\sigma)$, and $j=\inv(\tau)$. Note that
  $i+j=k$.

  Let $\ell$ be the length of $\tau$ and let $\tau'$ be the
  reverse-complement of $\tau$, that is, $\tau'_i= \ell+i -
  \tau_{\ell+1-i}$ for $i=1,\dotsc,\ell$. Then $\tau'$ is an
  indecomposable 132-avoiding partition with $j$ inversions. Let
  $\lambda\in\P(i)$ and $\mu\in\P(j)$ be the partitions representing
  $\sigma$ and $\tau'$, respectively. We then let
  $(\lambda,\mu)\in\Q(k)$ be the image of~$\pi$.

  To see that this is a bijection, choose a pair
  $(\lambda,\mu)\in\Q(k)$. Let $i$ be the sum of $\lambda$ and $j$ the
  sum of~$\mu$. Let $\sigma$ and $\tau'$ be the unique indecomposable
  132-avoiding permutations represented by $\lambda$ and $\mu$,
  respectively. Let $\tau$ be the reverse-complement of~$\tau'$. We
  have $n\ge k+2 = (i+1)+(j+1)\ge |\sigma|+|\tau|$. We can therefore
  construct a permutation $\pi=\sigma\oplus
  (\oplus^{n-|\sigma|-|\tau|} 1) \oplus\tau$, which is the preimage of
  $(\lambda,\mu)$.
\end{proof}

\begin{lemma}\label{lemma:pairs-of-partitions}
  Let $\rho$ be as in Lemma~\ref{lemma:partitions}. Then, for $k>0$,
  $$|\Q(k)| < (k+1)\rho^{\sqrt{2k}}.
  $$
\end{lemma}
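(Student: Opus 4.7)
The plan is to write $|\Q(k)|$ as a convolution of partition numbers and then bound each summand uniformly via Lemma~\ref{lemma:partitions}. By the definition of $\Q(k)$, we have
\[
|\Q(k)| \;=\; \sum_{i=0}^{k} p(i)\,p(k-i),
\]
so it suffices to show that each of the $k+1$ summands is at most $\rho^{\sqrt{2k}}$, with strict inequality in at least one of them.

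For any $i$ with $0\le i\le k$, Lemma~\ref{lemma:partitions} gives $p(i)\le \rho^{\sqrt{i}}$ (using $p(0)=1=\rho^{0}$ to cover the boundary case), and similarly $p(k-i)\le\rho^{\sqrt{k-i}}$. Hence
\[
p(i)\,p(k-i) \;\le\; \rho^{\sqrt{i}+\sqrt{k-i}}.
\]
The key elementary inequality is $\sqrt{i}+\sqrt{k-i}\le\sqrt{2k}$, which follows from
\[
\bigl(\sqrt{i}+\sqrt{k-i}\bigr)^{2} \;=\; k + 2\sqrt{i(k-i)} \;\le\; k + 2\cdot\tfrac{k}{2} \;=\; 2k,
\]
by the AM-GM bound $\sqrt{i(k-i)}\le k/2$. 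Therefore $p(i)p(k-i)\le\rho^{\sqrt{2k}}$ for every $i$.

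Summing over $i=0,1,\dots,k$ yields $|\Q(k)|\le(k+1)\rho^{\sqrt{2k}}$. To get the strict inequality claimed in the statement, observe that when $k>0$ the summand at $i=0$ equals $p(k)$, for which Lemma~\ref{lemma:partitions} provides the strict bound $p(k)<\rho^{\sqrt{k}}\le\rho^{\sqrt{2k}}$, so the overall sum is strictly less than $(k+1)\rho^{\sqrt{2k}}$. There is no real obstacle here; the only thing to watch is the boundary term $i=0$ or $i=k$, where Lemma~\ref{lemma:partitions} is only stated for $k>0$ and must be supplemented by the trivial value $p(0)=1$.
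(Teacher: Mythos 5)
Your proof is correct and follows essentially the same route as the paper's: expand $|\Q(k)|$ as the convolution $\sum_{i=0}^k p(i)p(k-i)$, bound each summand by $\rho^{\sqrt{i}+\sqrt{k-i}}$ using Lemma~\ref{lemma:partitions}, and then apply the elementary inequality $\sqrt{i}+\sqrt{k-i}\le\sqrt{2k}$. The only difference is that you are more explicit about the boundary term $p(0)=1$ and about where the strict inequality comes from, which the paper leaves implicit.
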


\begin{proof}
  We have
  \begin{align*}
    |\Q(k)| = \sum_{i=0}^{k}p(i)p(k-i)
    &< \sum_{i=0}^{k}\rho^{\sqrt{i}+\sqrt{k-i}}
    & \text{by Lemma~\ref{lemma:partitions}} \\
    &\leq \sum_{i=0}^{k}\rho^{\sqrt{2k}}
    & \text{since } \sqrt{i} + \sqrt{k-i} \leq \sqrt{2k} \\
    &= (k+1)\rho^{\sqrt{2k}},
  \end{align*}
  as claimed
\end{proof}

\begin{theorem}
  If Conjecture~\ref{increasing-columns} is true, then the Stanley-Wilf limit
for $1324$ is at most $\rho=e^{\pi\sqrt{\frac{2}{3}}}\simeq 13.001954$.
\end{theorem}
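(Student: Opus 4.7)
The plan is to mimic, step for step, the argument earlier in this section that gave $L(132)\le e^{\pi/\sqrt{3}}$. There, Proposition~\ref{prop:132} identified $\snk{n}{k}(132)$ with $p(k)$ once $n$ was large enough, and the monotonicity of columns came for free via the injection $\pi\mapsto\pi\oplus 1$. Here Proposition~\ref{prop:small-inv} plays the analogous role with $|\Q(k)|$ in place of $p(k)$, and the injection is replaced by the hypothesised Conjecture~\ref{increasing-columns}.

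The first step is to combine these two inputs into a uniform bound on every entry of the inversion-distribution table of $\sym(1324)$. Proposition~\ref{prop:small-inv} says that $\snk{n}{k}(1324)=|\Q(k)|$ whenever $n\ge k+2$, so the $k$th column is constantly $|\Q(k)|$ from row $k+2$ on. Because Conjecture~\ref{increasing-columns} asserts that each column is weakly increasing, every earlier entry in the $k$th column is bounded above by this eventual value, yielding
$$
\snk{n}{k}(1324)\le |\Q(k)| \quad\text{for all } n,k\ge 0.
$$

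The second step is a routine summation. Setting $m=\binom{n}{2}$ and noting that the upper bound $(k+1)\rho^{\sqrt{2k}}$ from Lemma~\ref{lemma:pairs-of-partitions} is increasing in $k$, I would write
$$
\s_n(1324)=\sum_{k=0}^{m}\snk{n}{k}(1324)\le\sum_{k=0}^{m}|\Q(k)|\le (m+1)^{2}\,\rho^{\sqrt{2m}}.
$$
Taking $n$th roots and letting $n\to\infty$, the factor $(m+1)^{2/n}$ tends to $1$ since $m=O(n^{2})$, while $\sqrt{2m}/n=\sqrt{1-1/n}\to 1$, so
$$
L(1324)=\lim_{n\to\infty}\s_n(1324)^{1/n}\le \rho.
$$

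Since the conjecture is assumed and all the bounding pieces are already in place, there is no genuine technical obstacle; the only substantive point is conceptual, namely recognising that Conjecture~\ref{increasing-columns} is exactly what is needed to upgrade the equality $\snk{n}{k}(1324)=|\Q(k)|$, which \emph{a priori} only holds in the stabilised range $n\ge k+2$, to a uniform upper bound valid for every $n$, which is precisely what the summation requires.
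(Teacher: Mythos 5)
Your proof is correct and is essentially the same as the paper's: you use Conjecture~\ref{increasing-columns} to upgrade the stable value $|\Q(k)|$ (from Proposition~\ref{prop:small-inv}) to a uniform upper bound on $\snk{n}{k}(1324)$, then sum over $k\le\binom{n}{2}$ using Lemma~\ref{lemma:pairs-of-partitions} and take $n$th roots. The paper phrases the first step as $\snk{n}{k}\le\snk{m+2}{k}=|\Q(k)|$ with $m=\binom{n}{2}$ rather than stating a uniform bound, but the content is identical.
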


\begin{proof}
  With $m=\binom{n}{2}$ we have
  \begin{align*}
    \s_n(1324) = \sum_{k=0}^{m} \snk{n}{k}(1324)
    &\leq \sum_{k=0}^{m} \snk{m+2}{k}(1324)
    & \text{by Conjecture~\ref{increasing-columns}} \\
    &=  \sum_{k=0}^{m} |\Q(k)|
    & \text{by Proposition~\ref{prop:small-inv}} \\
    &< \sum_{k=0}^{m} (k+1)\rho^{\sqrt{2k}}
    & \text{by Lemma~\ref{lemma:pairs-of-partitions}} \\
    &\leq (m+1)(m+1)\rho^{\sqrt{2m}}\\
    &= \tfrac{1}{4}(n^2 - n + 2)^2
       \rho^{n\sqrt{1 - 1/n}}.
  \end{align*}
  On taking the $n$th root and letting $n\to\infty$, the result follows.
\end{proof}

\section{Generalizations}
\label{sec:general}

We have seen that the conjectured inequality $\snk{n}{k}(1324)\le
\snk{n+1}{k}(1324)$ implies an estimate on $L(1324)$. Let us now focus
on the behavior of $\snk{n}{k}(\tau)$ for general patterns $\tau$. Let
us say that a pattern $\tau$ is \emph{$\inv$-monotone} if for every
$n$ and every $k$, we have the inequality
$\snk{n}{k}(\tau)\le\snk{n+1}{k}(\tau)$.

Recall that $\oplus^\ell 1$ is the identity pattern
$12\dotsb\ell$. Let us first observe that a pattern of this form
cannot be $\inv$-monotone.
\begin{lemma}\label{obs-increasing}
  For any $k$ and for any $n$ large enough, we have
  $\snk{n}{k}(\oplus^\ell 1)=0$. In particular, $\oplus^\ell 1$ is not
  $\inv$-monotone for any $\ell\ge 2$.
\end{lemma}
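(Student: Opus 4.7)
The plan is to show that any permutation $\pi$ with only a bounded number of inversions must contain a long increasing subsequence, so that once $n$ exceeds a threshold depending on $k$ and $\ell$ no element of $\symnk{n}{k}(\oplus^\ell 1)$ can exist. Then the second claim follows by exhibiting one concrete point of non-monotonicity.

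First I would observe that every descent of $\pi=\pi_1\dotsb\pi_n$ (that is, every index $i$ with $\pi_i>\pi_{i+1}$) contributes at least one inversion, so a permutation with $\inv(\pi)=k$ has at most $k$ descents and therefore decomposes as a concatenation of at most $k+1$ maximal increasing runs. Each such run is, in particular, an increasing subsequence of $\pi$. By pigeonhole, if
\[
n\ge (\ell-1)(k+1)+1,
\]
then some run has length at least $\ell$, and this run is an occurrence of $\oplus^\ell 1$ in $\pi$. Hence $\snk{n}{k}(\oplus^\ell 1)=0$ for all such $n$, which proves the first assertion.

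For the second assertion I would simply consider $k=0$. The only permutation of length $n$ with no inversions is the identity $12\dotsb n$; this avoids $\oplus^\ell 1$ precisely when $n<\ell$. Therefore $\snk{\ell-1}{0}(\oplus^\ell 1)=1$ while $\snk{\ell}{0}(\oplus^\ell 1)=0$, contradicting the monotonicity inequality and showing that $\oplus^\ell 1$ is not $\inv$-monotone for any $\ell\ge 2$.

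There is no real obstacle here: the argument is a one-line pigeonhole based on the elementary bound $\inv(\pi)\ge\mathrm{des}(\pi)$. The only minor point to be careful about is quoting the threshold in a way that makes the "for $n$ large enough" clause explicit, which the displayed bound above accomplishes.
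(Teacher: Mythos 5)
Your proof is correct and reaches the same threshold $n>(\ell-1)(k+1)$, but by a different (arguably more elementary) route. The paper invokes the Erd\H{o}s--Szekeres theorem: a $\oplus^\ell 1$-avoider of length $n>(\ell-1)(k+1)$ must contain a decreasing subsequence of length $k+2$, which alone forces more than $k$ inversions. You instead bound the number of descents by the number of inversions, conclude that $\pi$ splits into at most $k+1$ maximal increasing runs, and apply pigeonhole to find a run of length $\ell$. Your argument avoids citing Erd\H{o}s--Szekeres (in fact it can be read as a self-contained special case of it), which is a small gain in elementariness at no cost in the quantitative bound. You also spell out explicitly the non-monotonicity witness $\snk{\ell-1}{0}(\oplus^\ell 1)=1>0=\snk{\ell}{0}(\oplus^\ell 1)$, which the paper leaves implicit after deducing eventual vanishing of the columns; that is a nice touch since vanishing alone does not formally give a violation without also noting a positive earlier value.
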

\begin{proof}
  For $n > (\ell-1)(k+1)$, a well-known result of Erd\H os and
  Szekeres~\cite{ErdosSzekeres} guarantees that any permutation of
  length $n$ has either an increasing subsequence of length $\ell$ or
  a decreasing subsequence of length $k+2$. Therefore, there can be no
  $\oplus^\ell 1$-avoiding permutations of length $n$ with $k$
  inversions.
\end{proof}

On the other hand, some patterns are $\inv$-monotone for trivial reasons,
as shown by the next lemma.

\begin{lemma}
  Let $\tau=\tau_1\tau_2\dotsb\tau_\ell$ be a pattern such that
  $\tau_1> 1$ or $\tau_\ell<\ell$. Then $\tau$ is $\inv$-monotone.
\end{lemma}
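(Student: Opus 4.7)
The plan is to construct, in each of the two cases, an explicit injection from $\symnk{n}{k}(\tau)$ into $\symnk{n+1}{k}(\tau)$ that preserves the number of inversions and the property of avoiding~$\tau$. The two cases are symmetric in a natural sense (they are swapped by reverse-complement), so it suffices to describe both maps and check them independently.

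For the case $\tau_\ell < \ell$, I would use the map $\pi \mapsto \pi \oplus 1$. This clearly preserves the number of inversions, since appending a new maximum element at the end creates no new inversions, and it is obviously injective. For the avoidance property, suppose for contradiction that $\pi \oplus 1$ contains an occurrence of $\tau$. If the occurrence avoids the appended element, then $\pi$ itself contains $\tau$, contradicting $\pi \in \symnk{n}{k}(\tau)$. Otherwise the appended element is used: being the largest entry of $\pi \oplus 1$ and situated at the last position, it plays the role of both the maximum value and the last position in the pattern. This forces $\tau_\ell = \ell$, contradicting the hypothesis.

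For the case $\tau_1 > 1$, I would use the map $\pi \mapsto 1 \oplus \pi$, which likewise preserves inversions and is injective. If $1 \oplus \pi$ contained $\tau$, then either the occurrence lies entirely in the shifted copy of $\pi$, contradicting $\pi \in \symnk{n}{k}(\tau)$, or else it uses the prepended entry~$1$, in which case this entry, being both the smallest value of $1\oplus\pi$ and occupying the first position, must play the role of the minimum and first element of $\tau$. This forces $\tau_1 = 1$, contradicting the hypothesis.

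There is no real obstacle: the entire argument is a two-line observation about $\oplus$-operations and their interaction with patterns whose extremal element is not at an extremal position. In particular, one can even combine both cases by noting that the hypothesis fails only for the increasing patterns $\oplus^\ell 1$, which are precisely the patterns excluded by Lemma~\ref{obs-increasing}.
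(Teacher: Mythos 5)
Your proposal is correct and takes essentially the same approach as the paper, which uses the map $\pi\mapsto 1\oplus\pi$ for the case $\tau_1>1$ and notes the other case is symmetric; you have simply spelled out the verification that the paper leaves as "plain."
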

\begin{proof}
  Suppose that $\tau_1> 1$. It is plain that $\pi\mapsto 1\oplus\pi$
  is an injection from $\symnk{n}{k}(\tau)$ into
  $\symnk{n+1}{k}(\tau)$, demonstrating that
  $\snk{n}{k}(\tau)\le\snk{n+1}{k}(\tau)$. The other case is
  symmetric.
\end{proof}

We are not able to characterize the $\inv$-monotone patterns. Based on
numerical evidence obtained for patterns of small size, we make the
following conjecture, which generalizes
Conjecture~\ref{increasing-columns}.

\begin{conjecture}\label{general-increasing-columns}
  Any pattern $\tau$ that is not an identity pattern is $\inv$-monotone.
\end{conjecture}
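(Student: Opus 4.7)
The plan is to construct, for each non-identity pattern $\tau$, an inversion-preserving injection $\Phi \colon \symnk{n}{k}(\tau) \hookrightarrow \symnk{n+1}{k}(\tau)$. The preceding lemma handles the cases $\tau_1 > 1$ and $\tau_\ell < \ell$ via the maps $\pi \mapsto 1\oplus\pi$ and $\pi \mapsto \pi\oplus 1$, respectively; each adds a single element without creating any new inversion. The substantially harder remaining case is when $\tau$ both begins with its minimum and ends with its maximum, yet differs from the identity. Note that for $\ell = 4$ this case reduces exactly to $\tau = 1324$, so already Conjecture~\ref{increasing-columns} would follow from handling it.

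For this case, I would first observe that the only insertions into $\pi$ which create no new inversions are those adding a size-one component between two existing components of $\pi$ (or at either end). If $\pi = \alpha_1 \oplus \cdots \oplus \alpha_c$, this yields $c+1$ candidate extensions $\pi^{(0)}, \ldots, \pi^{(c)}$, each a permutation of length $n+1$ with $k$ inversions. Analyzing how the new element can participate in a $\tau$-occurrence shows that, for $\pi \in \symnk{n}{k}(\tau)$, the extension $\pi^{(i)}$ contains $\tau$ precisely when, for some way of writing $\tau$ as a direct sum $\tau = \gamma \oplus 1 \oplus \delta$ (with the middle $1$ a size-one summand), the prefix $\alpha_1 \oplus \cdots \oplus \alpha_i$ contains $\gamma$ and the suffix $\alpha_{i+1} \oplus \cdots \oplus \alpha_c$ contains $\delta$. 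The plan is then to prove that for every $\pi$ at least one $\pi^{(i)}$ is admissible, and to define $\Phi(\pi) := \pi^{(i^\ast)}$ for a canonical choice of admissible index, for example the smallest one.

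The main obstacle is twofold. First, the existence of an admissible $i$ is delicate precisely when $\tau$ has size-one summands at both extremes, as with $\tau = 1324 = 1 \oplus 21 \oplus 1$: if $\pi$ contains both $213$ and $132$, then both $\pi^{(0)}$ and $\pi^{(c)}$ fail, and if in addition $\pi$ is indecomposable then there are no interior candidates at all. By Lemma~\ref{lemma:comp+inv}, indecomposability forces $k \geq n-1$, so this regime would need a separate, more global argument rather than a local insertion. Second, even when admissible insertions always exist, the raw assignment $(\pi, i) \mapsto \pi^{(i)}$ is not injective across $\pi$: the permutation $1 \oplus \sigma \oplus 1$ arises as $\pi^{(c)}$ of $\pi = 1 \oplus \sigma$ and also as $\pi^{(0)}$ of $\pi = \sigma \oplus 1$, so the canonical rule for selecting $i^\ast$ must be chosen so that $\pi$ can be recovered from $\Phi(\pi)$ alone. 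Carrying out both steps uniformly for every non-identity $\tau$ is what makes Conjecture~\ref{general-increasing-columns} genuinely hard, and is presumably why it remains open even in the specific case $\tau = 1324$.
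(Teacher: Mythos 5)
The statement you are addressing is a \emph{conjecture} in the paper, not a theorem: the authors explicitly write that they ``are not able to characterize the $\inv$-monotone patterns'' and offer Conjecture~\ref{general-increasing-columns} only on the strength of numerical evidence for small patterns. There is therefore no proof in the paper to compare against, and you correctly treat the task that way --- your write-up is an analysis of the natural insertion approach and of why it fails to close, not a claimed proof.

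Your analysis itself is sound. You correctly observe that a one-letter, inversion-preserving extension of $\pi=\alpha_1\oplus\dots\oplus\alpha_c$ must add a size-one summand between two components or at an end, giving the $c+1$ candidates $\pi^{(0)},\dots,\pi^{(c)}$. The criterion you state --- that $\pi^{(i)}$ contains $\tau$ iff $\tau$ admits a decomposition $\tau=\gamma\oplus 1\oplus\delta$ with $\gamma$ contained in $\alpha_1\oplus\dots\oplus\alpha_i$ and $\delta$ in $\alpha_{i+1}\oplus\dots\oplus\alpha_c$ --- is correct, as is the specialization for $\tau=1324$: the endpoint extensions $\pi^{(0)}$ and $\pi^{(c)}$ fail exactly when $\pi$ contains $213$, respectively $132$. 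The hard regime you isolate (indecomposable $\pi$ containing both halves) is genuine, and your appeal to Lemma~\ref{lemma:comp+inv} to see that it forces $k\ge n-1$ is exactly right; that this regime is nonempty is why the insertion scheme cannot work on its own. Your point about non-injectivity of the raw map $(\pi,i)\mapsto\pi^{(i)}$ is also a real issue that any eventual proof would have to address. One thing worth adding: the paper does establish an ``asymptotic'' version of the conjecture --- Propositions~\ref{pro-fibonacci} and~\ref{pro-nonfibonacci} show that for every fixed $k$, $\snk{n}{k}(\tau)$ is eventually a (weakly increasing) polynomial in $n$, so $\snk{n}{k}(\tau)\le\snk{n+1}{k}(\tau)$ holds once $n$ is large enough relative to $k$. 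This covers a complementary regime to the one your argument handles (small $k$ relative to $n$ fails for you precisely when $c=1$, i.e.\ $k\ge n-1$), but neither the paper's results nor your insertion scheme bridge the gap, and the conjecture remains open.
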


Another source of support for
Conjecture~\ref{general-increasing-columns} comes from our analysis of
the asymptotic behavior of $\snk{n}{k}(\tau)$ as $n$ tends to
infinity. To state the results precisely, we need some definitions.

A \emph{Fibonacci permutation} is a permutation $\pi$ that can be
written as a direct sum
$\pi=\alpha_1\oplus\alpha_2\oplus\dotsb\oplus\alpha_m$ where each
$\alpha_i$ is equal to 1 or to 21. In other words, a Fibonacci
permutation is a layered permutation whose every layer has size at
most~2.

\begin{proposition}\label{pro-fibonacci}
  Let $\tau$ be a Fibonacci pattern with $r\ge 1$ inversions. For
  every $k\ge r$, there is a polynomial $P$ of degree $r-1$ and an
  integer $n_0\equiv n_0(k,\tau)$, such that $\snk{n}{k}(\tau)=P(n)$
  for all $n\ge n_0$.
\end{proposition}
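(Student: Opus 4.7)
My plan exploits the fact, already used in Proposition~\ref{prop:132} and Proposition~\ref{prop:small-inv}, that a permutation with few inversions consists mostly of singleton components. Write $\tau = \gamma_1 \oplus \cdots \oplus \gamma_m$ with exactly $r$ of the $\gamma_j$ equal to $21$, and fix $k \ge r$. By Lemma~\ref{lemma:comp+inv}, any $\pi \in \symnk{n}{k}(\tau)$ has at most $k$ components of size at least two, whose total size is at most $2k$. I encode $\pi$ by its \emph{skeleton} $(\delta_1, \ldots, \delta_t)$, the ordered list of its non-trivial components (drawn from a set depending only on $k$), together with a gap vector $(g_0, g_1, \ldots, g_t) \in \mathbb{Z}_{\ge 0}^{t+1}$ recording the numbers of singletons inserted before, between and after the $\delta_i$'s; so $\sum_i g_i = n - \sum_i |\delta_i|$. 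Then $\snk{n}{k}(\tau)$ decomposes as a finite sum, over skeletons, of counts of $\tau$-avoiding gap vectors.

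For a fixed skeleton, removing a singleton from $\pi$ cannot create a new occurrence of $\tau$, so the set $S$ of $\tau$-avoiding gap vectors is downward closed in $\mathbb{Z}_{\ge 0}^{t+1}$. By Dickson's lemma its complement is a union of finitely many principal upward filters with minima $\vec{a}^{(1)}, \ldots, \vec{a}^{(N)}$, and standard inclusion-exclusion shows that, once $n$ is sufficiently large, the number of $\tau$-avoiding gap vectors summing to $n - s$ is a polynomial in $n$. Its degree equals $f - 1$, where $f$ denotes the ``dimension'' of $S$ — the largest integer such that some $f$ of the gap coordinates can be simultaneously made arbitrarily large while keeping $\pi$ $\tau$-avoiding.

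The heart of the proof is to show $f \le r$ for every contributing skeleton. Suppose instead there are $r + 1$ gap indices $m_0 < m_1 < \cdots < m_r$ whose coordinates can be made simultaneously large. The half-open intervals $(m_{j-1}, m_j]$ are disjoint, nonempty and contained in $\{1, \ldots, t\}$, so I may pick component indices $c_j \in (m_{j-1}, m_j]$; then $c_1 < \cdots < c_r$ and each of the $r + 1$ regions of $\pi$ cut out by $\delta_{c_1}, \ldots, \delta_{c_r}$ contains one of the free gaps $m_j$. For $n$ large, each such free gap carries more than $m$ singletons whose values form a contiguous block lying exactly between the values of the two neighbouring chosen components (or outside them at the ends). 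Placing the $r$ copies of $21$ of $\tau$ inside $\delta_{c_1}, \ldots, \delta_{c_r}$ — each of which contains a $21$-pattern since it is indecomposable of size at least two — and placing the $m - r$ singleton layers of $\tau$, in the orders prescribed by $\tau$, among the singletons of the free gaps, produces an occurrence of $\tau$ in $\pi$, contradicting $\tau$-avoidance.

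Summing over skeletons therefore yields $P(n)$ of degree at most $r - 1$. To see the degree is exactly $r - 1$ (for $r \ge 2$) it suffices to exhibit a skeleton with $t = r - 1$ non-trivial components, each of which avoids $21 \oplus 21 = 2143$, and with total inversions $k$: then $\pi$ cannot contain $r$ pairwise disjoint $21$'s in direct-sum order, so every gap vector is $\tau$-avoiding and all $r$ gaps are free. The choice $\delta_1 = 23\cdots(k-r+3)1$, which is indecomposable, has exactly $k - r + 2$ inversions and avoids $2143$, together with $\delta_2 = \cdots = \delta_{r-1} = 21$, works. The main obstacle I expect is the embedding argument of the third paragraph: one must verify carefully that the greedy choice $c_j \in (m_{j-1}, m_j]$ is genuinely compatible both with the value ranges demanded by the $1$-layers of $\tau$ and with any lower bounds on singletons in non-free gaps imposed by other partial embeddings.
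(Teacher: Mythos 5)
Your proposal follows essentially the same route as the paper. The skeleton/gap decomposition is precisely the paper's core/padding decomposition; you observe the same down-set structure on gap vectors; and the embedding argument you spell out in your third paragraph is exactly the justification for the paper's (unjustified) assertion in its ``second claim'' that a padding profile with more than $r$ entries larger than $\ell$ forces an occurrence of~$\tau$. The genuine points of difference are minor in substance: (i)~you re-derive eventual polynomiality of the counting function of a down-set via Dickson's lemma and inclusion--exclusion, whereas the paper quotes Stanley's result; (ii)~you control the degree skeleton-by-skeleton via the ``dimension'' of the gap down-set, whereas the paper proves a global lower bound from one explicit core and a global upper bound by bounding padding profiles with at most $r$ large entries. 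Both routes are correct and give the same bound. Your self-raised worry about the greedy choice $c_j\in(m_{j-1},m_j]$ is unfounded: since $\pi$ is a direct sum of the $\delta_i$'s and the singletons, the positional and value orderings automatically agree, and the embedding uses only the $r$ chosen non-trivial components and the $r+1$ designated free gaps, so there are no side constraints on non-free gaps.

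One small caveat worth naming: your degree-exactly-$(r-1)$ construction requires $r\ge 2$, as you note. The paper's core $(\beta_1,\dots,\beta_{r-1})$ for claim~1 degenerates in exactly the same way at $r=1$ (empty core, giving $0$ rather than $k$ inversions), and in fact the proposition as stated fails for $\tau=21$, where $\snk{n}{k}(21)=0$ for all $k\ge 1$. For the other Fibonacci patterns with $r=1$ a one-line supplementary argument (e.g.\ taking the single core component $1\ominus(\oplus^k 1)$, possibly after a reverse-complement) does give the required nonzero constant, so this is a fixable edge case rather than a structural flaw; but neither your proof nor the paper's currently handles it.
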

\begin{proof}
  We first observe that an arbitrary permutation $\pi$ can be uniquely
  expressed as a direct sum (possibly involving a single summand) of
  the form
  \[
  \pi=\alpha_0\oplus\beta_1\oplus\alpha_1\oplus\beta_2\oplus\dotsb\oplus\beta_m
  \oplus\alpha_m,
  \]
  where $m\ge 0$ is an integer, each $\alpha_i$ is a (possibly empty)
  identity permutation, and each $\beta_i$ is an indecomposable
  permutation of size at least two. For instance, if $\pi=124365$, we
  have $\alpha_0=12$, $\alpha_1=\alpha_2=\emptyset$, and
  $\beta_1=\beta_2=21$.  We will call the sequence
  $(\beta_1,\dotsc,\beta_m)$ \emph{the core} of $\pi$, and the
  sequence $(\alpha_0,\dotsc,\alpha_m)$ \emph{the padding}
  of~$\pi$. The sequence of integers $(|\alpha_0|,\dotsc,|\alpha_m|)$
  will be referred to as the \emph{padding profile} of~$\pi$. Of
  course, the padding is uniquely determined by its profile, and the
  permutation $\pi$ is uniquely determined by its core and its padding
  profile.

  Let $\tau$ be a Fibonacci pattern with $r$ inversions. Note that
  this is equivalent to saying that $\tau$ is a permutation whose core
  consists of $r$ copies of~$21$. Let $\ell$ be the length
  of~$\tau$. Let us fix an integer $k\ge r$ and focus on the values of
  $\snk{n}{k}(\tau)$ as a function of~$n$.

  Note that the core of a permutation $\pi$ with $\inv(\pi)=k$ can
  have at most $k$ components. Moreover, each component of the core of
  $\pi$ has size at most $k+1$, otherwise $\pi$ would have more than
  $k$ inversions by Lemma~\ref{lemma:comp+inv}. In particular, the
  permutations with $k$ inversions have only a finite number of
  distinct cores. Define $\sym^{k}(\tau)=\bigcup_{n\ge 1}
  \symnk{n}{k}(\tau)$. Let $C$ be the set of all the distinct cores
  formed by members of $\sym^{k}(\tau)$. Let $\symnk{n}{[c]}(\tau)$ be
  the set of permutations from $\symnk{n}{k}(\tau)$ whose core is
  equal to~$c$, and let $\snk{n}{[c]}(\tau)$ be its cardinality.
  Clearly, $\snk{n}{k}(\tau)=\sum_{c\in C}\snk{n}{[c]}(\tau)$.

  To prove our proposition, it is enough to prove the following three
  claims:
  \begin{enumerate}
  \item There is a constant $\gamma\equiv \gamma(k,\tau)$ such that
    $\snk{n}{k}(\tau)\ge \gamma n^{r-1}$ for every~$n$.
  \item For every $c\in C$, there is a constant
    $\delta\equiv\delta(k,c,\tau)$ such that $\snk{n}{[c]}(\tau)\le
    \delta n^{r-1}$.
  \item For every $c\in C$, there is a polynomial $P_c$ and a constant
    $n_c$ such that $\snk{n}{[c]}(\tau)=P_c(n)$ for every $n\ge n_c$.
  \end{enumerate}

  To prove the first claim, we consider the set of all permutations
  with core $c=(\beta_1,\beta_2,\dotsc,\beta_{r-1})$, where
  $\beta_1=1\ominus(\oplus^{k-r+2}1)=(k-r+3)12\dotsb(k-r+2)$, and
  $\beta_2=\beta_3=\dotsb=\beta_{r-1}=21$. Note that any permutation
  with core $c$ has exactly $k$ inversions and avoids $\tau$. The
  padding profile of such a permutation is a sequence of $r$
  nonnegative numbers whose sum is $n-\sum_{i=1}^{r-1}|\beta_i| =
  n-k-r+1$. The number of such sequences is $\binom{n-k}{r-1}$, which
  gives the claimed bound.

  To prove the second claim, fix a core $c=(\beta_1,\dotsc,\beta_m)\in
  C$, and define $t=\sum_{i=1}^m|\beta_i|$. Assume that $m\ge r$,
  otherwise there are only $O(n^{r-1})$ permutations with core $c$ and
  the claim is trivial. Let $\pi$ be a permutation with core~$c$, and
  let $a=(a_0,\dotsc,a_m)$ be the padding profile of~$\pi$. Observe
  that if $a$ has more than $r$ integers greater than $\ell$, then
  $\pi$ must contain~$\tau$. Thus, $\snk{n}{[c]}(\tau)$ can be bounded
  from above by the number of all the padding profiles of sum $n-t$
  and with at most $r$ components greater than~$\ell$. The number of
  such padding profiles may be bounded from above by
  $\binom{m+1}{r}\ell^{m+1-r}\binom{n-t}{r-1}$, proving the second
  claim.

  To prove the last claim, we reduce it to a known property of
  down-sets of integer compositions. Let $\bbN_0^d$ be the set of $d$-tuples of
  non-negative integers. Fix a core
  $c=(\beta_1,\dotsc,\beta_m)\in C$.  Let $a(\pi)$ denote the
  padding profile of a permutation~$\pi$. Define the sets $A_n=\{a(\pi):
  \pi\in\symnk{n}{[c]}\}$ and $A=\bigcup_{n\ge 0} A_n$. Define a
  partial order $\le$ on $\bbN_0^{m+1}$ by putting $(a_0,\dotsc,a_m)\le
  (b_0,\dotsc,b_m)$ if for every $i\in\{0,\dotsc,m\}$ we have $a_i\le
  b_i$. Note that for two permutations $\sigma$ and $\pi$ with core
  $c$, $\sigma$ is contained in $\pi$ if and only if $a(\sigma)\le
  a(\pi)$. In particular, the set $A$ is a down-set of $\bbN_0^{m+1}$,
  that is, if $a$ belongs to $A$ and $b\le a$, then $b$ belongs to $A$
  as well. To complete the proof, we use the following fact, due to
  Stanley~\cite{stanley,stanley2}.
  \begin{proposition}[Stanley]
    For every $d$, if $D$ is a down-set in $\bbN_0^d$ and $D(n)$ is the
    cardinality of the set $\{(a_1,\dotsc,a_d)\in D:
    a_1+\dotsb+a_d=n\}$, then there is a polynomial $P$ such that
    $D(n)=P(n)$ for all $n$ sufficiently large.
  \end{proposition}
  From this fact, we directly obtain that $|A_n|$ is eventually equal
  to a polynomial, and therefore $\snk{n}{[c]}(\tau)$ is eventually
  equal to a polynomial as well.
\end{proof}

Let $P(n)$ be the polynomial from Proposition~\ref{pro-fibonacci}. We
note that if $\tau = \oplus^{\ell}1$, then $P(n)$ is the zero polynomial
by Lemma~\ref{obs-increasing}; if $\tau = 132$, then $P(n)=p(k)$ by
Proposition~\ref{prop:132}; and if $\tau = 1324$, then $P(n)=|\Q(k)|$
by Proposition~\ref{prop:small-inv}. It would be interesting to know
what $P(n)$ is for other Fibonacci patterns.

The conclusion of Proposition~\ref{pro-fibonacci} cannot be extended
to non-Fibonacci patterns, as shown by the next proposition.

\begin{proposition}\label{pro-nonfibonacci}
  Let $\tau$ be a non-Fibonacci permutation. For every $k$ there
  exists a polynomial $P$ of degree $k$ and an integer $n_0$ such that
  for every $n\ge n_0$, $\snk{n}{k}(\tau)=P(n)$. Moreover,
  $P(n)=n^k/k!+ O(n^{k-1})$.
\end{proposition}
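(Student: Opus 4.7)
The plan is to adapt the core-padding machinery from Proposition~\ref{pro-fibonacci}, exploiting the non-Fibonacci hypothesis on $\tau$ to pin down the leading term. First I would fix $k$ and enumerate the (finitely many) cores $c = (\beta_1, \dotsc, \beta_m)$ with $\sum_i \inv(\beta_i) = k$. Since each $\beta_i$ is indecomposable of size at least 2 and therefore contributes at least one inversion, we have $m \le k$; equality $m = k$ forces every $\beta_i$ to be $21$, so the unique core with exactly $k$ components is the ``all-21'' core, whose associated permutations are precisely the Fibonacci permutations of length $n$ with $k$ inversions.

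The main new ingredient is the observation that every Fibonacci permutation avoids $\tau$. Since $\tau$ is non-Fibonacci, some indecomposable component $\tau^*$ of $\tau$ has size at least 3. Any occurrence of an indecomposable pattern must lie entirely within a single component of its host permutation (an occurrence straddling a component boundary would split $\tau^*$ as a direct sum, contradicting indecomposability), so an occurrence of $\tau^*$ inside a Fibonacci permutation would have to fit inside a single component of size at most 2, which is impossible. Thus the all-21 core alone contributes
$$\binom{n-k}{k} = \frac{n^k}{k!} + O(n^{k-1})$$
$\tau$-avoiding permutations of length $n$ with $k$ inversions, giving the matching lower bound and identifying the leading term.

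For the remaining cores (those with $m \le k-1$), I would argue exactly as in Proposition~\ref{pro-fibonacci}: the set of padding profiles of $\tau$-avoiding permutations with a given core $c$ is a down-set in $\bbN_0^{m+1}$ (deleting entries from the padding can only remove pattern occurrences), so by Stanley's theorem on down-sets the count $\snk{n}{[c]}(\tau)$ is eventually a polynomial in $n$ of degree at most $m \le k-1$, hence $O(n^{k-1})$. Summing over the finitely many cores yields that $\snk{n}{k}(\tau)$ is eventually a polynomial, with degree exactly $k$ and leading coefficient $1/k!$, both coming exclusively from the all-21 core. The main obstacle is the observation in the second paragraph, namely that a non-Fibonacci pattern is automatically avoided by every Fibonacci permutation; once that is in hand, the rest is a routine adaptation of the machinery already developed for Proposition~\ref{pro-fibonacci}.
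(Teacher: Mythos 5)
Your proposal is correct and takes essentially the same route as the paper: both rely on the same eventual-polynomiality machinery from Proposition~\ref{pro-fibonacci}, both hinge on the key observation that a non-Fibonacci pattern cannot occur in any Fibonacci permutation, and both extract the leading term $n^k/k!$ from the count $\binom{n-k}{k}$ of Fibonacci permutations with $k$ inversions. The only divergence is in how the upper bound is obtained: you split by core and invoke Stanley's down-set theorem to show each non-maximal core contributes $O(n^{k-1})$, whereas the paper simply bounds $\snk{n}{k}(\tau)$ above by the total number $\binom{n+k-1}{k}$ of all permutations with $k$ inversions and then squeezes. Your version is slightly more structural in pinpointing exactly where the leading term comes from, but it is not a genuinely different argument.
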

\begin{proof}
  We can show that $\snk{n}{k}(\tau)$ is eventually equal to a
  polynomial $P$ by the same argument as we used in the proof of
  Proposition~\ref{pro-fibonacci}. It is therefore enough to provide
  upper and lower bounds for $\snk{n}{k}(\tau)$ of the form
  $\frac{n^k}{k!}+ O(n^{k-1})$. To get the upper bound, note that the
  number of all permutations of length $n$ with $k$ inversions is at
  most $\binom{n+k-1}{k}$, as seen by encoding a permutation by its
  inversion table. For the lower bound, note that $\tau$ is not
  contained in any Fibonacci permutation, and the number of Fibonacci
  permutations of length $n$ with $k$ inversions is
  precisely~$\binom{n-k}{k}$.
\end{proof}

Propositions~\ref{pro-fibonacci} and \ref{pro-nonfibonacci} imply that
for any pattern $\tau$, any $k$ and any $n$ large enough, we have
$\snk{n}{k}(\tau)\le \snk{n+1}{k}(\tau)$, which corresponds to an
`asymptotic version' of
Conjecture~\ref{general-increasing-columns}. The two propositions also
imply a sharp dichotomy between Fibonacci and non-Fibonacci patterns,
in the sense of the next corollary.
\begin{corollary}
  Let $\snk{n}{k}$ be the number of all permutations of size $n$
  with $k$ inversions. Let $\tau$ be a pattern with $r$
  inversions. Define
  \[
  Q(k,\tau)=\lim_{n\to\infty}\frac{\snk{n}{k}(\tau)}{\snk{n}{k}}
  \]
  as the asymptotic probability that a large permutation with $k$
  inversions avoids~$\tau$. If $\tau$ is a Fibonacci pattern and $k\ge
  r$, then $Q(k,\tau)=0$.  In all other cases $Q(k,\tau)=1$.
\end{corollary}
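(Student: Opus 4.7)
The plan is to split the proof into three cases according to whether $\tau$ is a Fibonacci pattern and, when it is, how $k$ compares with $r=\inv(\tau)$. The one auxiliary fact I need beyond Propositions~\ref{pro-fibonacci} and~\ref{pro-nonfibonacci} is an asymptotic for the total Mahonian count, namely $\snk{n}{k} = \frac{n^k}{k!} + O(n^{k-1})$. This is immediate from the sandwich
$$\binom{n-k}{k} \le \snk{n}{k} \le \binom{n+k-1}{k},$$
whose upper bound is the standard inversion-table encoding and whose lower bound counts Fibonacci permutations of length $n$ with exactly $k$ inversions; both bounds already appear in the proof of Proposition~\ref{pro-nonfibonacci}, and each equals $\frac{n^k}{k!}(1+O(1/n))$.

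Granting this asymptotic, the three cases fall out quickly. First, suppose $\tau$ is Fibonacci with $r$ inversions and $k\ge r$. When $r\ge 1$, Proposition~\ref{pro-fibonacci} gives $\snk{n}{k}(\tau)=P(n)$ for $n$ large with $\deg P = r-1 < k$, while when $r=0$ the pattern $\tau$ is an identity pattern and Lemma~\ref{obs-increasing} gives $\snk{n}{k}(\tau)=0$ for $n$ large; either way $\snk{n}{k}(\tau)/\snk{n}{k}\to 0$, so $Q(k,\tau)=0$. Second, suppose $\tau$ is Fibonacci with $k<r$. Then any permutation of length $n$ with $k<r$ inversions has strictly fewer inversions than $\tau$ and hence cannot contain $\tau$, so $\snk{n}{k}(\tau)=\snk{n}{k}$ identically, giving $Q(k,\tau)=1$. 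Third, suppose $\tau$ is not Fibonacci. Then Proposition~\ref{pro-nonfibonacci} gives $\snk{n}{k}(\tau)=\frac{n^k}{k!}+O(n^{k-1})$, matching the leading term of $\snk{n}{k}$, so again $Q(k,\tau)=1$.

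I do not anticipate any serious obstacle; the sharp dichotomy between the two growth regimes, degree $r-1$ versus degree exactly $k$ with leading coefficient $\frac{1}{k!}$, has already been done by the two preceding propositions, and the corollary merely repackages this dichotomy as an asymptotic probability. The only genuinely new step is the one-line sandwich estimate for $\snk{n}{k}$, which is elementary and uses quantities that the paper has already produced.
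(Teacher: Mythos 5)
Your proof is correct and makes explicit what the paper leaves implicit: the paper gives no proof of this corollary, simply asserting after stating Propositions~\ref{pro-fibonacci} and~\ref{pro-nonfibonacci} that they ``imply a sharp dichotomy \dots in the sense of the next corollary.'' Your three-case split, together with the sandwich $\binom{n-k}{k}\le\snk{n}{k}\le\binom{n+k-1}{k}$ (both of whose ingredients already appear in the proof of Proposition~\ref{pro-nonfibonacci}), is exactly the reasoning the authors intend, and you correctly handle the two edge cases the propositions do not directly cover: the Fibonacci pattern with $r=0$ inversions, which is an identity pattern and is dispatched by Lemma~\ref{obs-increasing}, and the regime $k<r$, where a permutation with fewer inversions than $\tau$ trivially avoids $\tau$.
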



\end{document}